\documentclass[reqno]{amsart}
\usepackage{amsmath,amsthm,amscd,amssymb,amsfonts, amsbsy}
\usepackage{latexsym}
\usepackage{color}
\usepackage{enumerate}
\usepackage{pxfonts}

\theoremstyle{plain}
\newtheorem{theorem}[equation]{Theorem}
\newtheorem{lemma}[equation]{Lemma}

\newtheorem{proposition}[equation]{Proposition}

\theoremstyle{definition}

\theoremstyle{remark}
\newtheorem{remark}[equation]{Remark}

\newcommand{\dv}{\operatorname{div}}

\newcommand{\dist}{\operatorname{dist}}
\newcommand{\diam}{\operatorname{diam}}

\newcommand{\tr}{\operatorname{tr}}

\numberwithin{equation}{section}

\newcommand{\bR}{\mathbb{R}}

\providecommand{\set}[1]{\{#1\}}

\providecommand{\abs}[1]{\lvert#1\rvert}
\providecommand{\Abs}[1]{\left\lvert#1\right\rvert}

\providecommand{\norm}[1]{\lVert#1\rVert}

\begin{document}
\title[Estimates for Green function for non-divergence elliptic equation]
{Estimates for Green's functions of elliptic equations in non-divergence form with continuous coefficients}

\author[S. Kim]{Seick Kim}
\address[S. Kim]{Department of Mathematics, Yonsei University, 50 Yonsei-ro, Seodaemun-gu, Seoul 03722, Republic of Korea}
\email{kimseick@yonsei.ac.kr}
\thanks{S. Kim is partially supported by National Research Foundation of Korea (NRF) Grant No. NRF-2019R1A2C2002724 and No. NRF-20151009350.}

\author[S. Lee]{Sungjin Lee}
\address[S. Lee]{Department of Mathematics, Yonsei University, 50 Yonsei-ro, Seodaemun-gu, Seoul 03722, Republic of Korea}
\email{sungjinlee@yonsei.ac.kr}

\subjclass[2010]{Primary 35J08, 35B45 ; Secondary 35J47}

\keywords{}

\begin{abstract}
We present a new method for the existence and pointwise estimates of a Green's function of non-divergence form elliptic operator with Dini mean oscillation coefficients.
We also present a sharp comparison with the corresponding Green's function for constant coefficients equations.
\end{abstract}
\maketitle

\section{Introduction and main results}
We consider a second-order elliptic operator $L$ in non-divergence form
\begin{equation}							\label{master-nd}
L u=  a^{ij}(x) D_{ij} u,
\end{equation}
where the coefficient $\mathbf{A}:=(a^{ij})$ are symmetric and satisfy the uniform ellipticity condition. Namely
\begin{equation}					\label{ellipticity-nd}
a^{ij}=a^{ji},\quad \lambda \abs{\xi}^2 \le a^{ij}(x) \xi^i \xi^j \le \Lambda \abs{\xi}^2,
\end{equation}
for some positive constants $\lambda$ and $\Lambda$ in a domain $\Omega \subset \bR^n$ with $n \ge 3$.
Here and below, we use the usual summation convention over repeated indices.

In this article, we are concerned with construction and pointwise estimates for the Green's function $G(x,y)$ of the non-divergent operator $L$ in $\Omega$.
In a recent article \cite{HK20}, it is shown that if the coefficients $\mathbf{A}$ is of Dini mean oscillation and the domain $\Omega$ is bounded and has $C^{2,\alpha}$ boundary, then the Green's function exists and satisfies the pointwise bound
\begin{equation}				\label{bound1}
\abs{G(x,y)} \le C \abs{x-y}^{2-n}.
\end{equation}
Before proceeding further, let us introduce the definition of Dini mean oscillation. 
For $x\in \bR^n$ and $r>0$, we denote by $B(x,r)$ the Euclidean ball with radius $r$ centered at $x$, and write $\Omega(x,r):=\Omega \cap B(x,r)$.
We denote
\[
\omega_{\mathbf A}(r, x):= \fint_{\Omega(x,r)} \,\abs{\mathbf{A}(y)-\bar {\mathbf A}_{\Omega(x,r)}}\,dy, \quad \text{ where } \;\bar{\mathbf A}_{\Omega(x,r)} :=\fint_{\Omega(x,r)} \mathbf{A},
\]
and we write
\begin{equation}			\label{def_mo}
\omega_{\mathbf A}(r, D):= \sup_{x \in D} \omega_{\mathbf A}(r, x) \quad \text{and}\quad \omega_{\mathbf A}(r)=\omega_{\mathbf A}(r, \bar \Omega).
\end{equation}
We say that $\mathbf{A}$ is of Dini mean oscillation in $\Omega$ if $\omega_{\mathbf A}(r)$ satisfies the Dini condition; i.e.,
\[
\int_0^1 \frac{\omega_{\mathbf A}(t)}t \,dt <+\infty.
\]
It is clear that if $\mathbf{A}$ is Dini continuous, then $\mathbf{A}$ is of Dini mean oscillation.
Also if $\mathbf{A}$ is of Dini mean oscillation, then $\mathbf{A}$ is uniformly continuous in $\Omega$ with its modulus of continuity controlled by $\omega_\mathbf{A}$; see \cite[Appendix]{HK20}.
However, a function of Dini mean oscillation is not necessarily Dini continuous; see \cite{DK17} for an example.

The main result of \cite{HK20} is interesting because unlike the Green's function for uniformly elliptic operators in divergence form, the Green's function for non-divergent elliptic operators does not necessarily enjoy the pointwise bound \eqref{bound1} even in the case when the coefficient $\mathbf A$ is uniformly continuous; see \cite{Bauman84}.
It should be noted that the Dini mean oscillation condition is the weakest assumption in the literature that guarantees the pointwise bound \eqref{bound1}.
The proof in \cite{HK20} is based on considering approximate Green's functions (as in \cite{GW82, HK07}) and showing that they satisfy specific estimates, as well as a local $L^\infty$ estimate for solutions to the adjoint equation $L^\ast u=0$, which is shown in \cite{DK17, DEK18}.
This $L^\infty$ estimate is crucial for the pointwise bound \eqref{bound1} and it is where the Dini mean oscillation condition is strongly used; a mere continuity of $\mathbf A$  is not enough to produce such an estimate.
We should recall that the adjoint operator $L^\ast$ is given by
\[
L^\ast u = D_{ij} (a^{ij}(x) u).
\]
We should also mention that there are many papers in the literature dealing with the existence and estimates of Green's functions or fundamental solutions of non-divergence form elliptic or parabolic operators with measurable or continuous coefficients; see e.g. \cite{Bauman84b, Bauman85, FS84, FGMS88, Krylov92, Esc2000, Cho11}.

In this article we give an alternative proof for the existence of Green's function.
More precisely, we construct Green's function from that of the corresponding constant coefficients operator resulting from ``freezing coefficients''.
We shall use $L^p$ theory for the adjoint operator in this process.
We then utilize the local $L^\infty$ estimates for adjoint solutions established in \cite{DK17, DEK18} to get the pointwise bound \eqref{bound1} for the Green's function.
One prominent advantage of this approach is that it yields a sharp comparison with the Green's function for constant coefficients operator.
In particular, we shall show that
\begin{equation}				\label{asymptotics}
G(x_0,x)-G_0(x_0, x)= o(\abs{x-x_0}^{2-n})\;\text{ as }\; x\to x_0,
\end{equation}
where $G_0$ is the Green's function of the constant coefficient operator $L_0$ given by
\begin{equation}				\label{eq2203sun}
L_0u:= a^{ij}(x_0) D_{ij} u,
\end{equation}
provided that the mean oscillation of $\mathbf A$ satisfies so-called ``double Dini condition'' near $x_0$; that is, we have
\[
\int_0^1 \frac{1}{s} \int_0^s \frac{\omega_{\mathbf A}(t,\Omega(x_0, r_0))}{t}\,dt\,ds = \int_0^1 \frac{\omega_{\mathbf A}(t,\Omega(x_0, r_0))\ln \frac{1}{t}}{t}\,dt <+\infty,
\]
for some $r_0>0$.
The asymptotic behavior \eqref{asymptotics} is well known for the Green's functions for elliptic operators in divergence form with continuous coefficients; see \cite{DM95}.
However, in the non-divergence form setting, this is a new result and it is one of the novelties in our work.
We now present our main theorem.

\begin{theorem}					\label{thm01}
Let $\Omega$ be a bounded $C^{2,\alpha}$ domain in $\bR^n$ with $n\ge 3$.
Assume the coefficient $\mathbf{A}=(a^{ij})$ of the operator $L$ in \eqref{master-nd} satisfies the uniform ellipticity condition \eqref{ellipticity-nd} and is of Dini mean oscillation in $\Omega$.
Then, there exists a unique Green's function $G(x,y)$ of the operator $L$ in $\Omega$ and it satisfies the pointwise estimate
\[
\abs{G(x,y)} \le C \abs{x-y}^{2-n},
\]
where $C=C(n,\lambda, \Lambda, \Omega, \omega_{\mathbf A})$.
Moreover, if there is some $r_0>0$ such that $\omega_{\mathbf A}(t, \Omega(x_0, r_0))$ satisfies double Dini condition
\begin{equation}				\label{eq1243f}
\int_0^1 \frac{1}{s} \int_0^s \frac{\omega_{\mathbf A}(t,\Omega(x_0, r_0))}{t}\,dt\,ds = \int_0^1 \frac{\omega_{\mathbf A}(t,\Omega(x_0, r_0))\ln \frac{1}{t}}{t}\,dt <+\infty,
\end{equation}
then we have
\begin{equation}				\label{eq1725sun}
\lim_{x\to x_0}\, \abs{x-x_0}^{n-2}\,\abs{G(x_0, x)-G_0(x_0, x)}=0,
\end{equation}
where $G_0$ is the Green's function of the constant coefficient operator $L_0$ as in \eqref{eq2203sun}.
\end{theorem}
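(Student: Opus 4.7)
The plan is to realize $G(x_0,\cdot)$ as a perturbation of the Green's function $G_0(x_0,\cdot)$ for the frozen constant-coefficient operator $L_0$ defined in \eqref{eq2203sun}. Setting $v_0(y):=G_0(x_0,y)$, an elementary distributional computation using $L_0^{\ast}=L_0$ gives
\[
L^{\ast} v_0 \;=\; \delta_{x_0} + D_{ij}\bigl((a^{ij}-a^{ij}(x_0))\,v_0\bigr)\quad\text{in }\Omega,
\]
so that writing $G(x_0,\cdot)=v_0+R$ reduces the construction to solving the adjoint boundary-value problem
\[
L^{\ast} R \;=\; -D_{ij} F^{ij}\;\;\text{in }\Omega,\qquad R = -v_0\;\;\text{on }\partial\Omega,\qquad F^{ij}:=(a^{ij}-a^{ij}(x_0))\,v_0.
\]

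To produce $R$, I would invoke the $L^p$ theory for $L^{\ast}$. Because $\abs{v_0(y)}\lesssim \abs{y-x_0}^{2-n}$ and $\mathbf A$ is bounded, $F^{ij}\in L^p(\Omega)$ for every $p<n/(n-2)$ and hence $D_{ij}F^{ij}\in W^{-2,p}(\Omega)$. Dualizing the $W^{2,p'}$-solvability of $L$ (which holds because $\mathbf A$ is of Dini mean oscillation and $\partial\Omega\in C^{1,1}$) yields a unique $L^p$-solution $R$, and uniqueness of the Green's function itself then follows from the maximum principle for $L$. To upgrade the $L^p$ bound to the pointwise estimate $\abs{R(y)}\le C\abs{y-x_0}^{2-n}$, I would apply the local $L^{\infty}$ estimate for adjoint solutions established in \cite{DK17,DEK18} on Whitney-type balls $B_r(y)$ with $r\sim \abs{y-x_0}/2$, on which the Dini-weighted norm of $F^{ij}$ is controlled in terms of $\omega_{\mathbf A}$; combined with the standard bound on $v_0$ this yields the pointwise estimate on $G$.

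The main obstacle, and the genuinely new content of the theorem, is the asymptotic \eqref{eq1725sun}. I would rescale by setting $R_\rho(z):=\rho^{n-2}R(x_0+\rho z)$, so that $R_\rho$ satisfies an adjoint equation on $B_1$ whose right-hand side is bounded in absolute value by $C\,\omega_{\mathbf A}(\rho\abs{z},\Omega(x_0,r_0))\,\abs{z}^{2-n}$ and is therefore small in $L^p(B_1)$ for any $p<n/(n-2)$. The target \eqref{eq1725sun} is equivalent to the assertion $\sup_{\abs{z}=1}\abs{R_\rho(z)}\to 0$ as $\rho\to 0$. Applying the local $L^{\infty}$ adjoint estimate of \cite{DK17,DEK18} together with a dyadic Campanato-type iteration across concentric shells -- each contributing a factor of $\omega_{\mathbf A}$ at its scale -- produces a bound of the form $C\int_0^{c\rho}\omega_{\mathbf A}(t)\,t^{-1}\log(c\rho/t)\,dt$, which is exactly the iterated (double) Dini integral appearing in \eqref{eq1243f} and tends to $0$ under the standing hypothesis. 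The delicate point is scale matching: a single application of the DMO estimate only reproduces the crude bound $O(\abs{y-x_0}^{2-n})$, and the asymptotic improvement to $o$ genuinely relies on the extra logarithmic factor supplied by the double Dini condition.
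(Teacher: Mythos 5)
Your overall strategy coincides with the paper's: construct $G(x_0,\cdot)=G_0(x_0,\cdot)+R$ by perturbation, solve for $R$ via the $L^p$ adjoint theory of Escauriaza--Montaner (the paper's Lemma~\ref{lem01}), and then upgrade to a pointwise bound through the local $L^\infty$ estimate for adjoint solutions from \cite{DK17,DEK18} (the paper's Lemmas~\ref{prop01} and~\ref{prop02}). The distributional identity $L^\ast v_0=\delta_{x_0}+D_{ij}\bigl((a^{ij}-a^{ij}(x_0))v_0\bigr)$ is exactly the paper's starting point, and your observation that $F^{ij}\in L^p$ only for $p<\tfrac{n}{n-2}$ matches Lemma~\ref{lem3.3}.

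There is, however, a genuine gap in your pointwise-bound step. The local $L^\infty$ estimate you invoke has the form
\[
\norm{R}_{L^\infty(B(y_0,r))}\le C\Bigl(\fint_{B(y_0,2r)}\abs{R}+\int_0^r\frac{\omega_{\mathbf g}(t,B(y_0,2r))}{t}\,dt\Bigr),
\]
and it is the first term, not the Dini integral of the data, that is the bottleneck. From the global bound $\norm{R}_{L^p(\Omega)}\le C$ with $p<\tfrac{n}{n-2}$, H\"older gives only $\fint_{B(y_0,2r)}\abs{R}\lesssim r^{-n/p}$, and since $n/p>n-2$ this is \emph{worse} than $r^{2-n}$; the estimate you need does not follow. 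This is precisely why the paper splits $\mathbf g=\mathbf g_1+\mathbf g_2$ with a cutoff at scale $r\sim\abs{y_0-x_0}$: the far piece $\mathbf g_1$ lies in $L^{p_1}$ for $p_1>\tfrac{n}{n-2}$ with norm $\lesssim r^{2-n+n/p_1}$, so its contribution $v_1$ averages to $\lesssim r^{2-n}$, while the near piece $\mathbf g_2$ vanishes on $B(y_0,2r)$ and contributes only through a small $L^{p_2}$ norm. Without some analogue of this decomposition, the $\fint\abs{R}$ term cannot be controlled. A related point you elide: $F^{ij}$ is singular at $x_0$, so bounding $\omega_{\mathbf g}(t,B(y_0,2r))$ ``in terms of $\omega_{\mathbf A}$'' requires a chaining/telescoping argument (the paper's Lemma~\ref{lem3.13}) and is valid only for $t\lesssim\abs{y_0-x_0}$.

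For the asymptotics, your rescaling $R_\rho(z)=\rho^{n-2}R(x_0+\rho z)$ and the claim of a bound $\int_0^{c\rho}\omega_{\mathbf A}(t)t^{-1}\log(c\rho/t)\,dt$ are a plausible but different route from the paper. The paper instead uses the endpoint exponent $p_2=\tfrac{n}{n-2}$, for which $\norm{\mathbf g_2}_{L^{n/(n-2)}}$ is exactly what the double Dini integral in \eqref{eq1243f} controls, and then takes the cutoff scale $\delta$ fixed but small; no logarithm appears in that calculation. Your dyadic-shell argument could produce the extra $\log$ factor as you describe, and dominated convergence would then close the estimate, but the same unresolved issue with the $\fint\abs{R_\rho}$ term appears in the rescaled picture: $\norm{R_\rho}_{L^p(B_{1/4}(z_0))}=\rho^{\,n-2-n/p}\norm{R}_{L^p(B_{\rho/4}(x_0+\rho z_0))}$, and for $p<\tfrac{n}{n-2}$ the prefactor diverges, so the global $L^p$ bound again gives no information. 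Fixing this requires a decomposition or an a priori decay estimate for $R$ near $x_0$, which is exactly what your sketch is missing.
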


\begin{remark}
As stated in \cite{HK20}, pointwise estimates for $D_x G(x,y)$ and $D_x^2 G(x,y)$ are also available.
They are obtained from \eqref{bound1} via local $L^\infty$ estimates for first and second derivatives of solutions to $Lu=0$ as established in \cite{DK17, DEK18}.
We only treat the case when $n\ge 3$ in this article and we refer to \cite{DK20} for two dimensional case.
In a separate paper \cite{KL21p}, we construct the fundamental solution for parabolic equations in non-divergence form with Dini mean oscillation coefficients and establish Gaussian bounds for the fundamental solution.
\end{remark}

\section{Preliminary lemmas}
In this section, we present some technical lemmas which will be used in the proof of Theorem~\ref{thm01}.
We need to consider the boundary value problem of the form
\begin{equation}				\label{adj_eq}
L^\ast v = \dv^2 \mathbf{g} + f\;\text{ in }\;\Omega,\quad v=\frac{\mathbf{g} \nu\cdot \nu}{\mathbf{A}\nu\cdot \nu}\;\text{ on }\;\partial \Omega,
\end{equation}
where $\mathbf{g}=(g^{ij})$ is an $n \times n$ matrix-valued function,
\[
\dv^2 \mathbf{g}:=D_{ij}g^{ij},
\]
and $\nu$ is the unit exterior normal vector of $\partial\Omega$.
For $\mathbf{g} \in L^p(\Omega)$ and $f \in L^p(\Omega)$, where $1<p<\infty$ and $\frac{1}{p}+ \frac{1}{p'}=1$, we say that $v$ in $L^p(\Omega)$ is an adjoint solution of \eqref{adj_eq} if $v$ satisfies
\begin{equation}			\label{eq1519m}
\int_\Omega v Lu = \int_\Omega \tr(\mathbf{g} D^2u) + \int_\Omega f u
\end{equation}
for any $u$ in $W^{2,p'}(\Omega) \cap W^{1,p'}_0(\Omega)$.

\begin{lemma}				\label{lem01}
Let $1<p<\infty$ and assume that $\mathbf{g} \in L^p(\Omega)$ and $f \in L^p(\Omega)$.
Then there exists a unique adjoint solution $u$ in $L^p(\Omega)$.
Moreover, the following estimates holds.
\[
\norm{u}_{L^p(\Omega)} \le C \left( \norm{\mathbf g}_{L^p(\Omega)} + \norm{f}_{L^p(\Omega)} \right),
\]
where a constant $C$ depends on $\Omega$, $p$, $n$, $\lambda$, $\Lambda$, and $\omega_{\mathbf A}$.
\end{lemma}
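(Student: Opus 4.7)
The plan is to prove Lemma~\ref{lem01} by a standard duality argument, using the $W^{2,p'}$-solvability of the Dirichlet problem for $L$ on $C^{1,1}$ domains with Dini mean oscillation coefficients. This solvability result is precisely what the Dong--Kim theory (cited by the paper via \cite{DK17, DEK18}) provides: for every $h \in L^{p'}(\Omega)$ there is a unique $u_h \in W^{2,p'}(\Omega) \cap W^{1,p'}_0(\Omega)$ with $Lu_h = h$ a.e., and $\norm{u_h}_{W^{2,p'}(\Omega)} \le C \norm{h}_{L^{p'}(\Omega)}$ with $C = C(n,\lambda,\Lambda,\Omega,p,\omega_{\mathbf A})$. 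I would first record this fact cleanly, since it is the only nontrivial analytic input.

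Next, I would define a linear functional $T : L^{p'}(\Omega) \to \bR$ (or $\bC$) by
\[
T(h) := \int_\Omega \tr\bigl(\mathbf{g}\, D^2 u_h\bigr) + \int_\Omega f\, u_h,
\]
where $u_h$ is the unique solution above. H\"older's inequality together with the $W^{2,p'}$ estimate gives
\[
\abs{T(h)} \le \norm{\mathbf{g}}_{L^p(\Omega)}\,\norm{D^2 u_h}_{L^{p'}(\Omega)} + \norm{f}_{L^p(\Omega)}\,\norm{u_h}_{L^{p'}(\Omega)} \le C \bigl(\norm{\mathbf g}_{L^p(\Omega)} + \norm{f}_{L^p(\Omega)}\bigr)\,\norm{h}_{L^{p'}(\Omega)}.
\]
The Riesz representation theorem then produces a unique $v \in L^p(\Omega)$ with $T(h) = \int_\Omega v h$ for every $h \in L^{p'}(\Omega)$, and $\norm{v}_{L^p(\Omega)}$ inherits the same bound. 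Since the map $u \mapsto Lu$ is a bijection from $W^{2,p'}(\Omega) \cap W^{1,p'}_0(\Omega)$ onto $L^{p'}(\Omega)$, plugging $h = Lu$ into the identity above gives precisely the adjoint relation \eqref{eq1519m}, so $v$ is an adjoint solution.

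Uniqueness is immediate from the same surjectivity: if $v_1, v_2 \in L^p(\Omega)$ both satisfy \eqref{eq1519m} with the same data, their difference $w := v_1 - v_2$ satisfies $\int_\Omega w\, Lu = 0$ for every $u \in W^{2,p'}(\Omega) \cap W^{1,p'}_0(\Omega)$; since $Lu$ ranges over all of $L^{p'}(\Omega)$, we obtain $\int_\Omega w\, h = 0$ for every $h \in L^{p'}(\Omega)$, and hence $w \equiv 0$.

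The main (and indeed only nontrivial) obstacle is the $W^{2,p'}$-solvability with a constant depending on $\omega_{\mathbf A}$; once this is in hand, the duality scheme is essentially automatic. One point I would verify carefully is that the weak formulation \eqref{eq1519m} genuinely encodes the stated boundary condition $v = (\mathbf{g}\nu\cdot\nu)/(\mathbf{A}\nu\cdot\nu)$ on $\partial\Omega$: this should follow by formally integrating by parts twice in $\int v Lu - \int \tr(\mathbf{g} D^2 u)$, using $u=0$ on $\partial\Omega$ so that $Du = (\partial_\nu u)\nu$ there, and comparing boundary integrals of $\partial_\nu u$ against $(\mathbf{A}\nu\cdot\nu)v - \mathbf{g}\nu\cdot\nu$. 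This interpretation is justified a posteriori once enough regularity is established, but for the $L^p$ existence statement itself the distributional identity \eqref{eq1519m} is all that is used.
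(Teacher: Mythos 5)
Your duality argument is correct and matches the reference the paper cites for this lemma, \cite[Lemma~2]{EM2016}: there too one solves $Lu_h = h$ in $W^{2,p'}(\Omega)\cap W^{1,p'}_0(\Omega)$, bounds the functional $h \mapsto \int_\Omega \tr(\mathbf g\, D^2 u_h) + \int_\Omega f u_h$ via H\"older and the $W^{2,p'}$ estimate, and invokes the Riesz representation theorem together with the surjectivity of $L$ to get existence, the norm bound, and uniqueness all at once. One caveat on attribution: the $W^{2,p'}$ solvability you invoke as the ``only nontrivial analytic input'' is \emph{not} what \cite{DK17, DEK18} establish (those papers give $C^1$ and $C^2$ Schauder-type estimates under the Dini mean oscillation hypothesis); the correct input is the Calder\'on--Zygmund theory for non-divergence operators with VMO coefficients on $C^{1,1}$ domains (Chiarenza--Frasca--Longo, Krylov), which applies here because Dini mean oscillation implies VMO with a modulus controlled by $\omega_{\mathbf A}$, and this is precisely how the constant's dependence on $\omega_{\mathbf A}$ enters.
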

\begin{proof}
See  \cite[Lemma~2]{EM2016}.
\end{proof}

The proof of next lemma is implicitly given in the proof of \cite[Theorem~1.10]{DK17}.
However, an estimate like \eqref{eq1023m} does not appear explicitly in the literature and  we provide a proof in the Appendix for reader's convenience.
It should be emphasized that the lemma asserts that to get a local $L^\infty$ estimate of the solution $u$, only a local information on the data $\mathbf{g}$ is needed.

\begin{lemma}			\label{prop01}
Let $R_0>0$ and $\mathbf{g}=(g^{ij})$ be of Dini mean oscillation in $B(x_0, R_0)$.
Suppose $u$ is an $L^2$ solution of 
\[
L^\ast u= \dv^2 \mathbf{g}\;\text{ in }\;B(x_0, 2R),
\]
where $0<R \le \frac12 R_0$.
Then we have
\begin{equation}					\label{eq1023m}
\norm{u}_{L^\infty(B(x_0,R))} \le C \left(\fint_{B(x_0,2R)} \abs{u} + \int_0^{R} \frac{\omega_{\mathbf g}(t, B(x_0,2R))}{t}\,dt \right),
\end{equation}
where $C=C(n,\lambda, \Lambda, \omega_{\mathbf A}, R_0)$.
\end{lemma}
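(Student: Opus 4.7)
The plan is to prove the local $L^\infty$ bound by a Campanato-type mean-oscillation iteration in the spirit of \cite{DK17}, tracking the dependence on $\omega_{\mathbf g}$ explicitly. Define the local mean oscillation
\[
\phi(y,r):=\inf_{c\in\bR}\fint_{B(y,r)}\abs{u-c}.
\]
The central step is a one-step decay: there exist $\kappa\in(0,\tfrac14)$ and $\eta\in(0,\tfrac12)$ such that
\[
\phi(y,\kappa r)\le \eta\,\phi(y,r)+C\,\omega_{\mathbf g}(r,B(x_0,2R))
\]
holds uniformly in $y\in B(x_0,R)$ and $r\in(0,r_1]$, where the threshold $r_1>0$ is fixed in terms of $\omega_{\mathbf A}$.

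To establish the decay, I would freeze coefficients at $y$, setting $L_0 u:=a^{ij}(y)D_{ij}u$, and decompose $u=u_1+u_2$ on $B(y,r)$. The piece $u_1$ is defined via Lemma~\ref{lem01} on $B(y,r)$ as the adjoint solution of
\[
L^\ast u_1=\dv^2\bigl(\mathbf g-\bar{\mathbf g}_{B(y,r)}-u(\mathbf A-\mathbf A(y))\bigr),
\]
with boundary values dictated by \eqref{adj_eq}, and $u_2:=u-u_1$. Since $\mathbf A(y)$ and $\bar{\mathbf g}_{B(y,r)}$ are constants, a direct computation in the adjoint formulation \eqref{eq1519m} gives $L_0^\ast u_2=L_0 u_2=0$ in $B(y,r)$, so $u_2$ is $L_0$-harmonic and hence smooth. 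Interior Campanato decay for constant-coefficient elliptic equations yields
\[
\fint_{B(y,\kappa r)}\bigabs{u_2-(\bar u_2)_{B(y,\kappa r)}}\le C\kappa\,\fint_{B(y,r)}\bigabs{u_2-(\bar u_2)_{B(y,r)}}\le C\kappa\,\phi(y,r)+C\fint_{B(y,r)}\abs{u_1},
\]
while Lemma~\ref{lem01} applied on $B(y,r)$ with some $p\in(1,\infty)$, combined with the uniform continuity of $\mathbf A$ and $\mathbf g$ with modulus controlled by $\omega_{\mathbf A}$ and $\omega_{\mathbf g}$ (cf.\ \cite[Appendix]{HK20}), gives
\[
\fint_{B(y,\kappa r)}\abs{u_1}\le C\kappa^{-n}\bigl(\omega_{\mathbf g}(r,B(x_0,2R))+\omega_{\mathbf A}(r,B(x_0,2R))\,\|u\|_{L^\infty(B(y,r))}\bigr).
\]
Combining the two, choosing $\kappa$ so that $C\kappa\le\tfrac14$, and restricting to $r\le r_1$ where $\omega_{\mathbf A}(r)$ is small enough to absorb the $\|u\|_{L^\infty}$ term back into the left-hand side yields the asserted decay.

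With the decay in hand, iterating along the geometric sequence $r_j=\kappa^j R$ and using the elementary comparison $\sum_j\omega_{\mathbf g}(\kappa^j R)\le C_\kappa\int_0^R\omega_{\mathbf g}(t)/t\,dt$ leads to
\[
\sum_{j\ge 0}\phi(y,\kappa^j R)\le C\Bigl(\phi(y,R)+\int_0^R\frac{\omega_{\mathbf g}(t,B(x_0,2R))}{t}\,dt\Bigr).
\]
At any Lebesgue point $y$ of $u$, the standard telescoping $|u(y)-\bar u_{B(y,R)}|\le C\sum_j\phi(y,\kappa^j R)$, combined with the trivial estimates $|\bar u_{B(y,R)}|,\phi(y,R)\le C\fint_{B(x_0,2R)}\abs{u}$ (using $B(y,R)\subset B(x_0,2R)$ for $y\in B(x_0,R)$), produces \eqref{eq1023m}; taking the essential supremum in $y$ completes the proof. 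The principal obstacle is the circular appearance of $\|u\|_{L^\infty(B(y,r))}$ in the $u_1$ estimate; it is resolved by restricting the iteration to scales $r\le r_1$ and treating the intermediate regime $r_1<r\le R$ by the trivial bound $\phi(y,r)\le 2\fint_{B(y,r)}\abs{u}$. This is precisely how the constant $C$ picks up its dependence on $\omega_{\mathbf A}$ and, through $r_1$, on $R_0$. A secondary technical point is the rigorous distributional justification of the decomposition $u=u_1+u_2$ and the matching of the boundary condition from \eqref{adj_eq} on $\partial B(y,r)$; this is the content implicit in the proof of \cite[Theorem~1.10]{DK17}.
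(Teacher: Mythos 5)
Your overall architecture (freeze coefficients, split off a perturbation, one-step oscillation decay, dyadic iteration, telescoping at Lebesgue points, handling of the $\norm{u}_{L^\infty}$ term at small scales) matches the paper's. But there is a genuine gap in the key step, namely the estimate of the perturbation piece $u_1$. You propose to bound $\fint_{B(y,\kappa r)}\abs{u_1}$ by applying Lemma~\ref{lem01}, i.e.\ the $L^p$ theory with some $p>1$. That requires controlling $\bigl(\fint_{B(y,r)}\abs{\mathbf g-\bar{\mathbf g}_{B(y,r)}}^{p}\bigr)^{1/p}$, and this quantity is \emph{not} dominated by the $L^1$ mean oscillation $\omega_{\mathbf g}(r,\cdot)$ (H\"older goes the wrong way). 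Your fallback --- uniform continuity of $\mathbf g$ with modulus controlled by $\omega_{\mathbf g}$ --- gives $\norm{\mathbf g-\bar{\mathbf g}}_{L^\infty(B(y,r))}\lesssim\int_0^r\omega_{\mathbf g}(t)t^{-1}\,dt$, so the one-step error becomes $\int_0^r\omega_{\mathbf g}(t)t^{-1}\,dt$ rather than $\omega_{\mathbf g}(r)$; summing this over the dyadic scales $\kappa^jR$ produces the \emph{double} Dini integral $\int_0^R\frac1s\int_0^s\omega_{\mathbf g}(t)t^{-1}\,dt\,ds$, which is neither assumed finite nor bounded by the right-hand side of \eqref{eq1023m}. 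The same defect appears in your treatment of the coefficient term: freezing at the point value $\mathbf A(y)$ (rather than at the average $\bar{\mathbf A}_{B(y,r)}$) costs $\fint_{B(y,r)}\abs{\mathbf A-\mathbf A(y)}\lesssim\int_0^r\omega_{\mathbf A}(t)t^{-1}\,dt$, again a double-Dini quantity after summation, and double Dini is not assumed for $\mathbf A$ either.

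The paper's proof is built precisely to avoid this. It measures oscillation in the $L^{1/2}$ quasi-norm, $\phi(x,r)=\inf_q\bigl(\fint_{B(x,r)}\abs{u-q}^{1/2}\bigr)^2$, freezes the coefficients at their average $\bar{\mathbf A}_{B(x,r)}$, and estimates the perturbation $w$ not by Lemma~\ref{lem01} but by the weak type-$(1,1)$ estimate of \cite[Lemma~2.23]{DK17} for the constant-coefficient adjoint problem: the distribution function of $w$ is controlled by the $L^1$ norm of the data $\abs{\mathbf A-\bar{\mathbf A}}\,\norm{u}_{L^\infty}+\abs{\mathbf g-\bar{\mathbf g}}$, and integrating the distribution function yields $\bigl(\fint\abs{w}^{1/2}\bigr)^2\le C\omega_{\mathbf A}(r)\norm{u}_{L^\infty}+C\omega_{\mathbf g}(r,x)$ --- exactly the single mean oscillations needed for the Dini summation. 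You need this $L^1\to L^{1,\infty}$ input (or an equivalent device); without it the argument does not close. A secondary point: the term $\omega_{\mathbf A}(r)\norm{u}_{L^\infty(B(y,r))}$ cannot be ``absorbed into the left-hand side'' of the one-step decay, since it involves the supremum of $u$ on the larger ball; the paper carries it through the iteration to reach $\norm{u}_{L^\infty(B(\bar x,r))}\le 3^{-n}\norm{u}_{L^\infty(B(\bar x,2r))}+\cdots$ and then removes it by a second iteration over the nested balls $B\bigl(x_0,(\tfrac32-2^{-k})R\bigr)$; your sketch of this step should be replaced by that (standard) argument.
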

\begin{proof}
See Appendix.
\end{proof}

The next lemma is an extension of Lemma~\ref{prop01} up to the $C^{2,\alpha}$ boundary.
\begin{lemma}			\label{prop02}
Let $\Omega$ be a bounded $C^{2,\alpha}$ domain.
Assume that $\mathbf{g}=(g^{ij})$ are of Dini mean oscillation in $\Omega$.
Let $u \in L^2(\Omega)$ be the solution of the adjoint problem
\[
L^\ast u= \dv^2 \mathbf{g}\;\text{ in }\;\Omega, \quad 
u=\frac{\mathbf{g} \nu\cdot \nu}{\mathbf{A}\nu\cdot \nu}\;\text{ on }\;\partial \Omega.
\]
Then for $x_0 \in \bar \Omega$ and $0<R \le \frac12 \diam \Omega$, we have
\[
\norm{u}_{L^\infty(\Omega(x_0,R))} \le C \left(\fint_{\Omega(x_0,2R)} \abs{u} + \int_0^{R} \frac{\omega_{\mathbf g}(t, \Omega(x_0,2R))}{t}\,dt \right),
\]
$C=C(n,\lambda, \Lambda, \omega_{\mathbf A}, \Omega)$.
\end{lemma}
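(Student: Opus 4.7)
The plan is to split based on the position of $x_0$ relative to $\partial\Omega$ and to mimic the interior argument of Lemma~\ref{prop01} near boundary points. If $\dist(x_0,\partial\Omega)\ge R$, then $B(x_0,R)\subset\Omega$, the boundary data plays no role, and Lemma~\ref{prop01} applies essentially verbatim after enlarging the radius slightly (using that $\Omega(x_0,2R)$ contains a ball of comparable size). So the substance of the proof is the case when $x_0$ lies within distance $O(R)$ of $\partial\Omega$, and without loss of generality I will assume $x_0\in\partial\Omega$ up to replacing $x_0$ with a nearby boundary point and adjusting constants.

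For the boundary case, I would flatten $\partial\Omega$ near $x_0$ by a $C^{1,1}$ diffeomorphism $\Phi$. In the straightened coordinates $y=\Phi(x)$, $\Omega$ becomes the half-ball $B^+=B(0,r)\cap\{y_n>0\}$, the operator $L$ transforms to a non-divergence operator $\tilde L$ with new symmetric uniformly elliptic coefficients $\tilde{\mathbf A}$, and $\mathbf g$ transforms to $\tilde{\mathbf g}$; since $D\Phi$ is Lipschitz, both $\tilde{\mathbf A}$ and $\tilde{\mathbf g}$ remain of Dini mean oscillation with modulus controlled by $\omega_{\mathbf A}$ and $\omega_{\mathbf g}$ respectively. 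A direct computation shows that the boundary condition $u=(\mathbf g\nu\cdot\nu)/(\mathbf A\nu\cdot\nu)$ transforms into $\tilde u=\tilde g^{nn}/\tilde a^{nn}$ on $\{y_n=0\}$, which is again of Dini mean oscillation along the flat boundary.

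The core step is a Campanato-type iteration at the boundary, parallel to what is done in the interior case. At scale $\rho$, let $v_\rho$ be the solution of the frozen-coefficient adjoint problem
\[
L_0^\ast v_\rho=\dv^2\bar{\mathbf g}_\rho\;\text{ in } B^+(0,\rho),\qquad v_\rho=\frac{\bar g^{nn}_\rho}{a_0^{nn}}\;\text{ on } B^+(0,\rho)\cap\{y_n=0\},
\]
where $L_0$ freezes the coefficients at $0$ and $\bar{\mathbf g}_\rho$ is the average of $\tilde{\mathbf g}$ over $B^+(0,\rho)$. Because the coefficients are constant and $\bar{\mathbf g}_\rho$ is constant, a linear change of variables diagonalizing $\mathbf A(0)$ reduces $v_\rho$ to a harmonic function on a half-ball with constant boundary data, so $v_\rho$ is in fact constant (the constant $\bar g^{nn}_\rho/a_0^{nn}$) and certainly admits a sharp pointwise bound by its boundary value. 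The difference $w_\rho=\tilde u-v_\rho$ then satisfies an adjoint equation with right-hand side $\dv^2(\tilde{\mathbf g}-\bar{\mathbf g}_\rho)+\dv^2((\tilde{\mathbf A}-\mathbf A(0))\,\text{(something)})$ and homogeneous boundary data, so Lemma~\ref{lem01} applied on the half-ball yields an $L^p$ bound for $w_\rho$ in terms of $\omega_{\tilde{\mathbf g}}(\rho)$ and $\omega_{\tilde{\mathbf A}}(\rho)$ times an $L^p$ norm of $\tilde u$. Iterating this decomposition over dyadic radii $\rho_k=2^{-k}R$, one obtains a telescoping control on $\osc_{B^+(0,\rho_k)}\tilde u$ whose sum is $\int_0^R \omega_{\tilde{\mathbf g}}(t)/t\,dt$ (together with a convergent contribution from the coefficient modulus), and this gives the desired bound after undoing the flattening.

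The hardest part is the bookkeeping in the boundary iteration: one must verify that applying Lemma~\ref{lem01} on half-balls (with the data suitably extended by zero so the hypotheses of Lemma~\ref{lem01} are met) produces $L^p$ bounds for $w_\rho$ with constants uniform in the scale $\rho$, and that the coefficient perturbation term, which would naively bring back $\|\tilde u\|_{L^\infty}$, can be absorbed by choosing the dyadic ratio and an intermediate scale carefully (as in the standard Campanato/Korn trick). A secondary obstacle is verifying the invariance of the Dini mean oscillation property under the $C^{1,1}$ straightening; this is routine but requires tracking how the modulus $\omega_{\mathbf g}$ deforms under a bi-Lipschitz change of coordinates with Lipschitz Jacobian.
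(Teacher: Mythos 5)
Your high-level strategy matches the paper's: flatten the $C^{1,1}$ boundary, then run a boundary version of the Campanato-type iteration used to prove Lemma~\ref{prop01} in the Appendix (which is exactly what the paper does by pointing to \cite[Lemma~2.26]{DEK18}). But the execution has two genuine gaps, and they are linked.

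First, you propose to control $w_\rho$ by applying Lemma~\ref{lem01}, which is an $L^p$ estimate with $p>1$. That requires the source to be small in $L^p$, but Dini mean oscillation of $\mathbf g$ and $\mathbf A$ only gives $L^1$ smallness of $\tilde{\mathbf g}-\bar{\mathbf g}_\rho$ and $\tilde{\mathbf A}-\mathbf A(0)$ over $B^+(0,\rho)$. The paper's interior proof gets around this precisely by using the weak type-$(1,1)$ estimate \cite[Lemma~2.23]{DK17} for the \emph{frozen}-coefficient adjoint operator, which converts $L^1$ control of the data into a quasi-norm bound $\bigl(\fint |w|^{1/2}\bigr)^2 \lesssim \omega_{\mathbf A}(r)\|u\|_\infty + \omega_{\mathbf g}(r)$; the boundary analogue of this weak type-$(1,1)$ estimate is what \cite[Lemma~2.26]{DEK18} provides. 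Replacing it by Lemma~\ref{lem01} breaks the argument at the very first estimate.

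Second, your comparison solution $v_\rho$ is forced to be constant (frozen coefficients, constant interior data, constant boundary data on the flat part), and you then set $w_\rho=\tilde u-v_\rho$. But then $w_\rho$ is essentially $\tilde u$ minus a constant, which is \emph{not} small — its size is on the order of the oscillation of $\tilde u$ itself, which is what one is trying to bound. The iteration does not close. In the paper's scheme, $w$ is defined directly as the solution of the frozen-coefficient adjoint problem with source $-\dv^2((\mathbf A-\bar{\mathbf A})u)+\dv^2(\mathbf g-\bar{\mathbf g})$ (so $w$ is small by the weak type-$(1,1)$ estimate), and $v:=u-w$ then solves the \emph{homogeneous} frozen-coefficient equation $L_0^\ast v=0$ — it is not constant, but its oscillation decays linearly in the radius by interior (and, at the flat boundary, up-to-the-boundary) gradient estimates for constant-coefficient equations. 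That decaying-oscillation $v$ is what drives the contraction $\phi(x,\kappa r)\le \tfrac12\phi(x,r)+C(\omega_{\mathbf A}(r)\|u\|_\infty+\omega_{\mathbf g}(r))$. To fix your proof, define $w_\rho$ (not $v_\rho$) as the solution of the frozen-coefficient half-ball problem whose source and boundary data carry the oscillations, estimate $w_\rho$ by the boundary weak type-$(1,1)$ estimate rather than Lemma~\ref{lem01}, and let the remainder $\tilde u - w_\rho$ satisfy the homogeneous frozen-coefficient equation with constant boundary data on the flat part. The flattening and the "intermediate scale / absorption" bookkeeping you describe are then the right secondary concerns.
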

\begin{proof}
By flattening the boundary, it suffices to get an estimate in half balls that corresponds to \eqref{eq1023m}.
It is obtained by replicating the proof of \cite[Lemma~2.26]{DEK18} in the same fashion as \eqref{eq1023m} is derived.
We leave the details to the readers.
\end{proof}

\section{Proof of Theorem~\ref{thm01}}
The organization of the proof is as follows.
In Sec.~\ref{sec3.1}, we first construct the Green's function $G^\ast(x,y)$ for the adjoint operator.
In Sec.~\ref{sec3.2} and \ref{sec3.3}  it will be shown that the adjoint Green's function has the pointwise bound $\abs{G^\ast(x,y)} \le C\abs{x-y}^{2-n}$.
In Sec.~\ref{sec3.4}, we show that $G(x,y)=G^\ast(y,x)$ becomes the Green's function and thus it also has the pointwise bound $\abs{G(x,y)} \le C \abs{x-y}^{2-n}$.
Finally, in Sec.~\ref{sec3.5} we establish the asymptotic formula \eqref{eq1725sun}. 

\subsection{Construction of adjoint Green's function}			\label{sec3.1}
Let $x_0 \in \Omega$ be fixed and denote
\[
\mathbf{A}_0=\mathbf{A}(x_0) \quad \text{and}\quad L_0 u := a^{ij}(x_0) D_{ij}u= \tr(\mathbf{A}_0 D^2 u).
\]
Let $G_0(x,y)$ be the Green's function for $L_0$ in $\Omega$.
Since $L_0$ is an elliptic operator with constant coefficients, the existence of $G_0$ as well as the following pointwise bound is well known.
\begin{equation}				\label{eq1224th}
\abs{G_0(x,y)} \le  C\abs{x-y}^{2-n}\quad \quad (x\neq y),
\end{equation}
where $C=C(n, \lambda, \Lambda)$.
Moreover, since $\mathbf{A}_0$ is symmetric, we have $L_0=L_0^\ast$ and $G_0$ is also symmetric, i.e.,
\[
G_0(x,y)=G_0(y,x) \quad (x\neq y).
\]

We shall now construct $G^\ast(\cdot, x_0)$, Green's function for $L^\ast$ in $\Omega$ with a pole at $x_0$.
Formally, we would have 
\[
L^\ast G^\ast(\cdot, x_0)=\delta(\cdot -x_0) \;\text{ in }\;\Omega,\quad G^\ast(\cdot, x_0)=0 \;\text{ on }\;\partial\Omega.
\]
On the other hand, since $L_0=L_0^\ast$, we have
\[
L_0^\ast G_0(\cdot, x_0)=\delta(\cdot -x_0) \;\text{ in }\;\Omega,\quad G_0(\cdot, x_0)=0 \;\text{ on }\;\partial\Omega.
\]
Therefore, if we set $v=G^\ast(\cdot, x_0)-G_0(\cdot, x_0)$, then we would have $v=0$ on $\partial\Omega$ and 
\begin{align*}
L^\ast v&=L^\ast G^\ast(\cdot,x_0)-L^\ast G_0(\cdot, x_0)+L_0^\ast G_0(\cdot, x_0) -L_0^\ast G_0(\cdot, x_0)\\
&=-(L^\ast-L_0^\ast)G_0(\cdot, x_0)=-\dv^2((\mathbf{A}-\mathbf{A}_0)G_0(\cdot, x_0))\quad\text{in }\;\Omega,
\end{align*}
which lead us to consider the problem
\begin{equation}				\label{eq1042th}
L^\ast v = \dv^2 \mathbf{g}\;\text{ in }\; \Omega,\quad 
v=\frac{\mathbf{g}\nu\cdot \nu}{\mathbf{A}\nu\cdot \nu} \;\text{ on }\;\partial \Omega,
\end{equation}
where we denote
\[
\mathbf{g}:=-(\mathbf{A}- \mathbf{A}_0) G_0(\cdot, x_0).
\]
Notice that since $\mathbf g$ vanishes on $\partial \Omega$, the  boundary condition in \eqref{eq1042th} reads simply that $v=0$ on $\partial\Omega$.

\begin{lemma}					\label{lem3.3}
For $\mathbf g=-(\mathbf{A}- \mathbf{A}_0) G_0(\cdot, x_0)$, we have $\mathbf{g} \in L^p(\Omega)$ for all $p \in [1, \frac{n}{n-2})$.
\end{lemma}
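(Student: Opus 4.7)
The plan is to reduce the claim to a routine integrability check in spherical coordinates using the pointwise bound \eqref{eq1224th} for $G_0$.

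First I would bound $|\mathbf{g}|$ pointwise. The ellipticity condition \eqref{ellipticity-nd} implies that each entry $a^{ij}$ is bounded in absolute value by $\Lambda$, so $|\mathbf{A}(x)-\mathbf{A}_0|\le 2n\Lambda$ for all $x\in\Omega$. Combining this with \eqref{eq1224th} yields
\[
|\mathbf{g}(x)| = |\mathbf{A}(x)-\mathbf{A}_0|\,|G_0(x,x_0)| \le C\,|x-x_0|^{2-n}
\]
for all $x\in\Omega\setminus\{x_0\}$, where $C=C(n,\lambda,\Lambda)$.

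Next I would compute $\|\mathbf{g}\|_{L^p(\Omega)}^p$. Since $\Omega$ is bounded, it is contained in some ball $B(x_0,R)$ with $R=\diam\Omega$, so
\[
\int_\Omega |\mathbf{g}|^p\,dx \le C^p \int_{B(x_0,R)} |x-x_0|^{(2-n)p}\,dx = C'\int_0^{R} r^{(2-n)p+n-1}\,dr.
\]
This integral is finite precisely when $(2-n)p+n-1>-1$, that is, $p<\frac{n}{n-2}$. Hence $\mathbf{g}\in L^p(\Omega)$ for every $p\in[1,\tfrac{n}{n-2})$, which is the claim.

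There is no real obstacle here; the statement is a direct consequence of the uniform bound on $\mathbf{A}-\mathbf{A}_0$ and the explicit Newtonian-type singularity of $G_0$ recorded in \eqref{eq1224th}. The only thing to keep in mind is that the exponent $\frac{n}{n-2}$ is sharp for this pointwise bound, which is why the statement is half-open at the right endpoint.
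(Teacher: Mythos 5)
Your proof is correct, and it is simpler than the one in the paper. You bound $|\mathbf{A}-\mathbf{A}_0|$ by a constant via ellipticity and then integrate the resulting power singularity $|x-x_0|^{(2-n)p}$ directly in spherical coordinates, which immediately gives the sharp threshold $p<\tfrac{n}{n-2}$. The paper instead splits $\Omega$ into dyadic annuli around $x_0$ and, rather than using a crude $L^\infty$ bound on $\mathbf{A}-\mathbf{A}_0$, estimates $\fint_{\Omega(x_0,2^{-k})}|\mathbf{A}-\mathbf{A}_0|$ via the Dini mean oscillation bound \eqref{eq1142tu}; this produces the more precise inequality \eqref{eq1041th}, in which $\|\mathbf g\|_{L^p}^p$ is controlled by $\int_0^1 \omega_{\mathbf A}(t,x_0)\,t^{-1}\,dt$ times a geometric sum, plus an outer term. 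For the purpose of Lemma~\ref{lem3.3} the extra precision is not needed, so your argument is entirely adequate. The reason the paper runs the more elaborate dyadic computation is that the same calculation is reused almost verbatim to derive the \emph{quantitative} bounds \eqref{eq1540th} and \eqref{eq1432f} for the truncated data $\mathbf{g}_2$, where the factor $\bigl(\int_0^{2r}\omega_{\mathbf A}(t,x_0)\,t^{-1}\,dt\bigr)^{1/p_2}$ must be shown to be small as $r\to 0$; your crude $L^\infty$ bound on $\mathbf{A}-\mathbf{A}_0$ would not produce that smallness, so while it proves the lemma, it would not carry through to those later estimates without modification.
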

\begin{proof}
First, observe that we have
\begin{align}					\nonumber	
\fint_{\Omega(x_0, r)} \abs{\mathbf{A}-\mathbf{A}_0}
&\le \fint_{\Omega(x_0, r)} \abs{\mathbf{A}(x)-\bar{\mathbf A}_{\Omega(x_0,r)}}\,dx+ \abs{\bar{\mathbf A}_{\Omega(x_0,r)}-\mathbf{A}(x_0)}\\
							\label{eq1142tu}
&\le \omega_{\mathbf A}(r, x_0)+ C\int_0^r \frac{\omega_{\mathbf A}(t,x_0)}{t}\,dt \le C\int_0^r \frac{\omega_{\mathbf A}(t,x_0)}{t}\,dt,
\end{align}
where we used \cite[Appendix]{HK20} in the second line.
Next, using \eqref{eq1224th} and 
\[
\norm{\mathbf{A}-\mathbf{A}_0}_{L^\infty(\Omega)} \le C,
\]
where $C=C(n,\Lambda)$, we have for $1\le p <\frac{n}{n-2}$ that
\begin{align}				\nonumber
\int_\Omega \abs{\mathbf g}^p&=\sum_{k=0}^\infty \int_{\Omega(x_0, 2^{-k})\setminus \Omega(x_0, 2^{-k-1})} \abs{\mathbf g}^p + \int_{\Omega\setminus \Omega(x_0, 1)}\abs{\mathbf g}^p \\
						\nonumber
&\le C \sum_{k=0}^\infty 2^{(n-2)pk} \int_{\Omega(x_0, 2^{-k})\setminus \Omega(x_0, 2^{-k-1})} \abs{\mathbf{A}-\mathbf{A}_0} + C \int_{\Omega\setminus \Omega(x_0, 1)} \abs{x-x_0}^{(2-n)p}\,dx\\
						\nonumber
&\le C \sum_{k=0}^\infty 2^{(n-2)pk} 2^{-kn} \fint_{\Omega(x_0, 2^{-k})} \abs{\mathbf{A}-\mathbf{A}_0} + C (\diam\Omega)^{(2-n)p+n}\\
						\label{eq1041th}
&\le  C\left(\int_0^1 \frac{\omega_{\mathbf A}(t,x_0)}{t}\,dt\right) \sum_{k=0}^\infty 2^{\{(n-2)p-n\}k}+ C (\diam\Omega)^{(2-n)p+n}<+\infty,
\end{align}
where we used \eqref{eq1142tu} in the last line.
\end{proof}

By Lemmas~\ref{lem01} and \ref{lem3.3}, we find that there exists a unique solution $v$ of  the problem \eqref{eq1042th} and $v \in L^p(\Omega)$ for all $p \in (1, \frac{n}{n-2})$.

Now, We claim that $G^\ast(\cdot, x_0)$ defined as
\begin{equation}				\label{eq1531m}
G^\ast(\cdot, x_0):=G_0(\cdot, x_0) + v
\end{equation}
becomes Green's function of $L^\ast$ in $\Omega$ with a pole at $x_0$.
Indeed, for any $f\in L^{p'}(\Omega)$ with $p'>\frac{n}{2}$, let $u \in W^{2,p'}(\Omega)\cap W^{1,p'}_0(\Omega)$ be the strong solution of
\begin{equation}				\label{eq2059m}
Lu=f \;\text{ in }\;\Omega,\quad u=0\;\text{ on }\;\partial \Omega.
\end{equation}
Then, by \eqref{eq1519m} and \eqref{eq1042th}, we have
\[
\int_\Omega v f= \int_\Omega G_0(\cdot, x_0) L_0 u -\int_\Omega G_0(\cdot, x_0) f=u(x_0)-\int_\Omega G_0(\cdot, x_0)f,
\]
where we use the fact that $G_0$ is the Green's function for the operator $L_0$.
Therefore, by \eqref{eq1531m}, we have
\[
u(x_0)= \int_\Omega G^\ast(\cdot, x_0) f,
\]
which means that $G^\ast(x, y)$ is the Green's function for the adjoint operator $L^\ast$.
See \cite[Remark 1.14]{HK20}.
As a matter of fact, we proved the following.
\begin{proposition}				\label{prop3.9}
For $p>\frac{n}{2}$ and $f\in L^{p}(\Omega)$, if $u \in W^{2,p}(\Omega)\cap W^{1,p}_0(\Omega)$ be the strong solution of \eqref{eq2059m}, then we have the representation formula
\begin{equation}				\label{eq2037m}
u(x)= \int_\Omega G^\ast(y, x) f(y)\, dy.
\end{equation}
\end{proposition}

\subsection{Pointwise estimates of adjoint Green's functions}			\label{sec3.2}
In this section, we shall establish
\begin{equation}				\label{eq0927f}
\abs{G^\ast(x,x_0)} \le C \abs{x-x_0}^{2-n}\quad\text{ in }\;\Omega \setminus \set{x_0}. 
\end{equation}
Let $v$ be as in \eqref{eq1042th} and define $\mathbf{g}_1$ and $\mathbf{g}_2$ by
\begin{equation*}
\mathbf{g}_1= -\zeta(\mathbf{A}- \mathbf{A}_0) G_0(\cdot, x_0)\quad\text{and}\quad \mathbf{g}_2= -(1-\zeta)(\mathbf{A}-\mathbf{A}_0) G_0(\cdot, x_0),
\end{equation*}
where $\zeta$ is a smooth function on $\bR^n$ such that
\begin{equation*}			
0\leq \zeta \leq 1, \quad \zeta=0 \;\text{ in }\;B(x_0,r),\quad \zeta=1\;\text{ in }\bR^n\setminus B(x_0,2r),\quad \abs{D\zeta} \le 2/r,
\end{equation*}
and $r>0$ is to be fixed later.

We note that $\mathbf{g}_1 \in L^{p_1}(\Omega)$ for any $p_1>\frac{n}{n-2}$ and $\mathbf{g}_2 \in L^{p_2}(\Omega)$ for any $p_2 <\frac{n}{n-2}$.
Indeed, the computation in \eqref{eq1041th} reveals that for $p_1>\frac{n}{n-2}$  we have
\begin{equation}				\label{eq1539th}
\int_\Omega \abs{\mathbf{g}_1}^{p_1} \le C \int_{\Omega\setminus \Omega(x_0,r)} \abs{x-x_0}^{(2-n)p_1}\,dx \le C r^{(2-n)p_1+n}
\end{equation}
and for $p_2 <\frac{n}{n-2}$ we have
\begin{equation}				\label{eq1540th}
\norm{\mathbf{g}_2}_{L^{p_2}(\Omega)} \le C\left(\int_0^{2r} \frac{\omega_{\mathbf A}(t,x_0)}{t}\,dt\right)^{1/p_2} r^{2-n+n/p_2}.
\end{equation}
Fix a $p_1 \in (\frac{n}{n-2},\infty)$ and let us write $v=v_1+v_2$, where $v_1 \in L^{p_1}(\Omega)$ is the solution of
\begin{equation}				\label{eq1226th}
L^\ast v_1 = \dv^2 \mathbf{g}_1\;\text{ in }\; \Omega,\quad 
v_1=\frac{\mathbf{g}_1\nu\cdot \nu}{\mathbf{A}\nu\cdot \nu} \;\text{ on }\;\partial \Omega.
\end{equation}
Then by Lemma~\ref{lem01} and \eqref{eq1539th}, we have
\begin{equation}				\label{eq1618th}
\norm{v_1}_{L^{p_1}(\Omega)} \le C r^{2-n+n/p_1}.
\end{equation}
On the other hand, note that $v_2=v-v_1$ satisfies
\begin{equation}				\label{eq1244th}
L^\ast v_2 = \dv^2 \mathbf{g}_2 \;\text{ in }\; \Omega,\quad 
v_2=\frac{\mathbf{g}_2 \nu\cdot \nu}{\mathbf{A}\nu\cdot \nu} \;\text{ on }\;\partial \Omega.
\end{equation}
By Lemma~\ref{lem01} and \eqref{eq1540th}, for $p_2 \in (1, \frac{n}{n-2})$, we have $v_2 \in L^{p_2}(\Omega)$ with 
\begin{equation}				\label{eq1541th}
\norm{v_2}_{L^{p_2}(\Omega)} \le C\left(\int_0^{2r} \frac{\omega_{\mathbf A}(t,x_0)}{t}\,dt\right)^{1/p_2} r^{2-n+n/p_2}.
\end{equation}

Now, for any fixed $y_0 \in \Omega$ with $y_0 \neq x_0$, we take 
\[
r=\tfrac15 \abs{y_0-x_0}
\]
and estimate $v_1(y_0)$ by using Lemma~\ref{prop02} as follows. 
\begin{equation}		\label{eq2017th}
\abs{v_1(y_0)} \le \norm{v_1}_{L^\infty(\Omega(y_0, r))} \le  C \fint_{\Omega(y_0,2r)} \abs{v_1} + C \int_0^{r} \frac{\omega_{\mathbf{g}_1}(t, \Omega(y_0,2r))}{t}\,dt.
\end{equation}
By H\"older's inequality and \eqref{eq1618th}, we have
\begin{equation}		\label{eq2018th}
\fint_{\Omega(y_0,2r)} \abs{v_1} \le \left(\fint_{\Omega(y_0,2r)} \abs{v_1}^{p_1}\right)^{1/p_1} \le C r^{-n/p_1} \norm{v_1}_{L^{p_1}(\Omega)} \le C r^{2-n}.
\end{equation}

\begin{lemma}			\label{lem3.13}
Let $\eta$ be a Lipschitz function on $\bR^n$ such that $0 \le \eta \le 1$ and $\abs{D \eta} \le 4/\delta$ for some $\delta>0$.
Let
\[
\mathbf{g}=-\eta (\mathbf{A}-\mathbf{A}_0) G_0(\cdot, x_0)
\]
and take $y_0 \in \Omega \setminus \set{x_0}$ with $r:=\frac15 \abs{x_0-y_0} \le \delta$.
Then, we have for any $t \in (0, r]$ that
\[
\omega_{\mathbf{g}}(t, \Omega(y_0,2r)) \le C r^{2-n}\left( \omega_{\mathbf A}(t, \Omega(x_0, 7r))+ \frac{t}{r} \int_0^t \frac{\omega_{\mathbf A}(s, \Omega(x_0, 7r))}{s}\,ds\right),
\]
where $C=C(n, \lambda, \Lambda, \Omega)$.
\end{lemma}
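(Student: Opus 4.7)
The plan is to exploit the product factorization $\mathbf{g} = \Phi\,(\mathbf{A}-\mathbf{A}_0)$ with $\Phi := -\eta\,G_0(\cdot, x_0)$: this isolates the non-smooth factor $\mathbf{A}-\mathbf{A}_0$ while keeping $\Phi$ smooth on the relevant region, after which a standard product decomposition handles the mean oscillation of $\mathbf{g}$.

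First I would record the geometric setup. Since $|x_0-y_0|=4r$ and $t \le r \le \delta$, for every $z \in \Omega(y_0, 2r)$ the set $\Omega(z, t)$ lies in the annulus $\Omega(x_0, 7r) \setminus B(x_0, r)$. On this annulus $\Phi$ is $C^\infty$, and the standard pointwise derivative estimates for the constant-coefficient Green's function $G_0(\cdot, x_0)$, together with $|D\eta| \le 4/\delta \le 4/r$ (using $\delta \ge r$), give
\[
\|\Phi\|_{L^\infty(\Omega(z,t))}\le Cr^{2-n}, \qquad \|D\Phi\|_{L^\infty(\Omega(z,t))}\le Cr^{1-n}.
\]

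With $\bar{\mathbf A} := \bar{\mathbf A}_{\Omega(z,t)}$ and $c := \Phi(z)\,(\mathbf{A}_0 - \bar{\mathbf A})$, a direct rearrangement gives
\[
\mathbf{g}(y) - c = -\Phi(z)\bigl(\mathbf{A}(y)-\bar{\mathbf A}\bigr) - \bigl(\Phi(y)-\Phi(z)\bigr)\bigl(\mathbf{A}(y)-\mathbf{A}_0\bigr),
\]
so $\omega_{\mathbf g}(t, z) \le 2\fint_{\Omega(z,t)} |\mathbf g - c|$ splits into two contributions. The first is $|\Phi(z)|\,\omega_{\mathbf A}(t, z) \le Cr^{2-n}\omega_{\mathbf A}(t, \Omega(x_0, 7r))$, which accounts exactly for the first term of the stated bound (using $\Omega(z,t)\subset\Omega(x_0, 7r)$). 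The second is bounded by $\|D\Phi\|_{L^\infty}\,t\,\fint_{\Omega(z,t)}|\mathbf A - \mathbf A_0|$; splitting $\mathbf A - \mathbf A_0 = (\mathbf A - \bar{\mathbf A}) + (\bar{\mathbf A} - \mathbf A(x_0))$ and absorbing the $\omega_{\mathbf A}(t,z)$ piece into the principal term (since $t/r \le 1$) reduces the task to controlling the cross contribution $Cr^{1-n}t\,|\bar{\mathbf A}_{\Omega(z,t)} - \mathbf A(x_0)|$.

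The main obstacle is to estimate this cross contribution by the sharp Dini integral $Cr^{2-n}(t/r)\int_0^t \omega_{\mathbf A}(s, \Omega(x_0, 7r))/s\,ds$; a naive Campanato telescoping only produces a bound with $\int_0^{Cr}\omega_{\mathbf A}/s\,ds$, since $\dist(z, x_0)\sim r$. To recover the scale-$t$ integral I would Taylor-expand $\Phi$ at $z$ to first order: the linear term $D\Phi(z)\cdot(y-z)$, when paired with the constant $\mathbf A(z)-\mathbf A(x_0)$ and averaged over the symmetric ball $B(z,t)$, contributes only to the mean and not to the oscillation because $\fint_{B(z,t)}(y-z)\,dy=0$; the quadratic Taylor remainder is of size $O(r^{-n}t^2)$ and produces only a higher-order $Cr^{2-n}(t/r)^2$ term. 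What remains is the small-scale fluctuation $\bar{\mathbf A}_{\Omega(z,t)} - \mathbf A(z)$, which for the Lebesgue point $z$ is bounded dyadically by $C\int_0^t \omega_{\mathbf A}(s, z)/s\,ds \le C\int_0^t \omega_{\mathbf A}(s, \Omega(x_0, 7r))/s\,ds$. Assembling these estimates gives the stated bound, with the boundary case handled by first flattening $\partial\Omega$ as in the proof of Lemma~\ref{prop02}.
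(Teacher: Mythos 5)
Your decomposition of $\omega_{\mathbf g}(t,z)$ via the constant $c$ is structurally similar to the paper's $I+II$ split, and you correctly identify the crux: bounding the cross contribution involving $|\bar{\mathbf A}_{\Omega(z,t)}-\mathbf A(x_0)|$. But the Taylor-expansion argument you propose to handle it is flawed, and this is where your proof has a genuine gap.

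You claim that the linear term $D\Phi(z)\cdot(y-z)$ paired with the constant $\mathbf A(z)-\mathbf A(x_0)$ ``contributes only to the mean and not to the oscillation because $\fint_{B(z,t)}(y-z)\,dy=0$.'' This is backwards. That term is a linear function of $y$ times a fixed matrix; having zero mean over $B(z,t)$ means precisely that it does \emph{not} contribute to the mean, but its mean oscillation $\fint |f-\bar f|=\fint |f|$ is of full size $\sim |D\Phi(z)|\,t\,|\mathbf A(z)-\mathbf A(x_0)|$. No choice of constant $c$ can absorb a non-constant linear function. So the quantity $|\mathbf A(z)-\mathbf A(x_0)|$ never gets bounded in your argument, and the very term you were trying to eliminate reappears unchanged.

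Moreover, your diagnosis that ``naive Campanato telescoping only produces $\int_0^{Cr}\omega_{\mathbf A}/s\,ds$'' misidentifies the mechanism the lemma needs. The paper does not dyadically telescope from scale $t$ up to scale $r$. It instead chains from $x_0$ to $\bar x\in\Omega(y_0,2r)$ through $N\le Cr/t$ intermediate points $x_1,\dots,x_N=\bar x$ in $\Omega(x_0,7r)$ with $|x_{i-1}-x_i|\le t$ (see \eqref{eq1712th}--\eqref{eq2202th}). Each chain step costs $C\omega_{\mathbf A}(t,\Omega(x_0,7r))$, and the fine-scale piece at the endpoint $x_0$ costs $C\int_0^t\omega_{\mathbf A}(s,x_0)/s\,ds$. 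This yields exactly
\[
\fint_{\Omega(\bar x,t)}|\mathbf A-\mathbf A(x_0)|\le C\Bigl(\tfrac{r}{t}\,\omega_{\mathbf A}(t,\Omega(x_0,7r))+\int_0^t\tfrac{\omega_{\mathbf A}(s,\Omega(x_0,7r))}{s}\,ds\Bigr),
\]
which, after multiplying by the Lipschitz factor $Cr^{1-n}t$, gives both terms in the lemma's bound. Note also that you only need the cross contribution to be bounded by the \emph{sum} $r^{2-n}\bigl(\omega_{\mathbf A}(t)+\tfrac{t}{r}\int_0^t\bigr)$, not by the second term alone, so the extra $\tfrac{r}{t}\omega_{\mathbf A}(t)$ coming from the chain is perfectly acceptable. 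Replacing your Taylor step by this fixed-step chain is the missing ingredient.
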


The above lemma, the proof of which is given in Section~\ref{sec3.3}, yields that (take $\eta=\zeta$ with $\delta=r$)
\begin{align}				\nonumber
\int_0^{r} \frac{\omega_{\mathbf{g}_1}(t, \Omega(y_0,2r))}{t}\,dt &\le C r^{2-n} \left(\int_0^r \frac{\omega_{\mathbf A}(t)}{t}\,dt+\frac{1}{r} \int_0^r \int_0^t \frac{\omega_{\mathbf A}(s)}{s}\,ds\,dt \right)\\
						\label{eq0746f}
&\le C r^{2-n} \int_0^r \frac{\omega_{\mathbf A}(t)}{t}\,dt.
\end{align}
Putting \eqref{eq0746f} back to \eqref{eq2017th} together with \eqref{eq2018th}, we get
\begin{equation}			\label{eq0819f}
\abs{v_1(y_0)} \le  C r^{2-n}\left(1+ \int_0^{r} \frac{\omega_{\mathbf A}(t)}{t}\,dt \right). 
\end{equation}
Next, we shall estimate $v_2(y_0)$.
Again, by Lemma~\ref{prop02}, we have
\[
\abs{v_2(y_0)} \le  C \fint_{\Omega(y_0,2r)} \abs{v_2} + C \int_0^{r} \frac{\omega_{\mathbf{g}_2}(t, \Omega(y_0,2r))}{t}\,dt.
\]
Notice that $\mathbf{g}_2$ vanishes in $\Omega(y_0, 3r)$ since $B(x_0,2r) \cap B(y_0, 3r) =\emptyset$.
Therefore, we have $\omega_{\mathbf{g}_2}(t, \Omega(y_0,2r))=0$ and thus
\begin{align}				\nonumber
\abs{v_2(y_0)} & \le C \fint_{\Omega(y_0,2r)} \abs{v_2} \le C \left(\fint_{\Omega(y_0,2r)} \abs{v_2}^{p_2}\right)^{1/p_2}\\
						\label{eq1542th}
&\le C r^{-n/p_2} \norm{v_2}_{L^{p_2}(\Omega)} \le C\left(\int_0^{2r} \frac{\omega_{\mathbf A}(t)}{t}\,dt\right)^{1/p_2} r^{2-n},
\end{align}
where we used \eqref{eq1541th}.
Therefore, by using \eqref{eq0819f} and \eqref{eq1542th}, and recalling that $v=v_1+v_2$ and $r=\frac15 \abs{x_0-y_0}$, we have
\begin{equation}			\label{eq0824f}
\abs{v(y_0)} \le C \left(1+ \int_0^{\abs{x_0-y_0}} \frac{\omega_{\mathbf A}(t)}{t}\,dt + \left(\int_0^{2\abs{x_0-y_0}} \frac{\omega_{\mathbf A}(t)}{t}\,dt\right)^{\frac{1}{p_2}} \right)\, \abs{x_0-y_0}^{2-n},
\end{equation}
where $C=C(n,\lambda, \Lambda, \omega_{\mathbf A}, \Omega)$.
Since
\[
G^\ast(y_0,x_0)=G_0(y_0,x_0)+v(y_0)
\]
and $y_0 \in \Omega \setminus \set{x_0}$ is arbitrary, the desired estimate \eqref{eq0927f} follows from \eqref{eq1224th} and \eqref{eq0824f}.

\subsection{Proof of Lemma~\ref{lem3.13}}			\label{sec3.3}
For $\bar x \in \Omega(y_0, 2r)$ and $0<t \le r$, we have
\begin{align*}
\omega_{\mathbf{g}}(t,\bar x)&=\fint_{\Omega(\bar x,t)} \,\Abs{(\mathbf{A}-\mathbf{A}_0) G_0(\cdot, x_0) \eta- \overline{((\mathbf{A}-\mathbf{A}_0) G_0(\cdot, x_0) \eta)}_{\Omega(\bar x, t)}}\\
&\le \fint_{\Omega(\bar x,t)} \,\Abs{(\mathbf{A}-\mathbf{A}_0) G_0(\cdot, x_0) \eta- \overline{(\mathbf{A}-\mathbf{A}_0)}_{\Omega(\bar x, t)} G_0(\cdot, x_0) \eta}\\
&\qquad \quad+ \fint_{\Omega(\bar x,t)} \,\Abs{\overline{(\mathbf{A}-\mathbf{A}_0)}_{\Omega(\bar x, t)} G_0(\cdot, x_0) \eta- (\overline{(\mathbf{A}-\bar{\mathbf A}) G_0(\cdot, x_0) \eta})_{\Omega(\bar x, t)}}\\
&=:I+II.
\end{align*}

Observe that we have $\dist(x_0, \Omega(\bar x, t)) \ge 2r$ and thus for $x$, $y \in \Omega(\bar x,t)$, we have
\begin{equation}				\label{eq1225th}
\abs{G_0(x, x_0)- G_0(y, x_0)} \le C t r^{1-n},
\end{equation}
where $C=C(n,\lambda, \Lambda, \Omega)$.
Since $\dist(x_0, \Omega(\bar x, t)) \ge 2r$, by using \eqref{eq1224th} we obtain
\begin{align}				\nonumber
I &\le \fint_{\Omega(\bar x, t)} \,\Abs{(\mathbf{A}-\mathbf{A}_0)- \overline{(\mathbf{A}-\mathbf{A}_0)}_{\Omega(\bar x, t)}} \abs{G_0(\cdot, x_0)}\\
						\label{eq2222th}
&\le \fint_{\Omega(\bar x, t)} \,C r^{2-n} \abs{\mathbf{A}-\bar{\mathbf A}_{\Omega(\bar x,t)}} \le C r^{2-n} \omega_{\mathbf A}(t, \bar x).
\end{align}
Also, we have
\begin{align}				\nonumber
II &\le \fint_{\Omega(\bar x, t)} \,\Abs{\fint_{\Omega(\bar x, t)}(\mathbf{A}(y)-\mathbf{A}(x_0)) \left(G_0(x, x_0)\eta(x)- G_0(y, x_0)\eta(y)\right) dy} dx \\
						\label{eq2223th}
&\le \fint_{\Omega(\bar x, t)} \fint_{\Omega(\bar x, t)} \abs{\mathbf{A}(y)-\mathbf{A}(x_0)}\, \abs{G_0(x, x_0)\eta(x)- G_0(y, x_0)\eta(y)}\,dy\,dx.
\end{align}
By using \eqref{eq1224th}, \eqref{eq1225th}, $\abs{D \eta} \le 4/\delta$, and $r\le  \delta$, we have  for $x$, $y \in \Omega(\bar x, t)$ that
\begin{align}				\nonumber
\abs{G_0(x, x_0)\eta(x)- G_0(y, x_0)\eta(y)}& \le \abs{G_0(x, x_0)- G_0(y, x_0)}\,\abs{\eta(x)} + \abs{G_0(y, x_0)}\,\abs{\eta(x)- \eta(y)}\\
						\label{eq2224th}						
&\le C t r^{1-n}+C r^{2-n} t/\delta \le C t r^{1-n}.
\end{align}
Plugging \eqref{eq2224th} into \eqref{eq2223th}, we obtain
\[
II \le C t r^{1-n} \fint_{\Omega(\bar x, t)}  \abs{\mathbf{A}(y)-\mathbf{A}(x_0)}\,dy.
\]
We claim that
\begin{equation}			\label{eq1307th}
\fint_{\Omega(\bar x, t)}  \abs{\mathbf{A}(y)-\mathbf{A}(x_0)}\,dy \le C \left( \frac{r \omega_{\mathbf{A}}(t, \Omega(x_0, 7r))}{t}+\int^{t}_0 \frac{\omega_{\mathbf{A}}(s, \Omega(x_0, 7r))}{s}\, ds\right),
\end{equation}
where $C=C(n, \lambda, \Lambda, \Omega)$.
Let us take the claim granted for now.
Then, we have
\begin{equation}
						\label{eq2226th}						
II \le C t r^{1-n} \left( \frac{r \omega_{\mathbf{A}}(t, \Omega(x_0, 7r))}{t}+\int^{t}_0 \frac{\omega_{\mathbf{A}}(s, \Omega(x_0, 7r))}{s}\, ds\right).
\end{equation}
Combining \eqref{eq2222th} and \eqref{eq2226th}, we have (recall $t\le r$)
\begin{equation*}			
\omega_{\mathbf{g}}(t, \bar x) \le I+II \le C r^{2-n}\left( \omega_{\mathbf A}(t, \Omega(x_0, 7r))+ \frac{t}{r} \int_0^t \frac{\omega_{\mathbf A}(s, \Omega(x_0, 7r))}{s}\,ds\right).
\end{equation*}
The lemma is proved by taking supremum over $\bar x \in \Omega(y_0, 2r)$.

It remains to prove the claim \eqref{eq1307th}.
Notice that we can choose a sequence of points $x_1$, $x_2$, $\ldots$, $x_N$ in $\Omega(x_0, 7r)$ with $x_N=\bar x$ in such a way that each line segment $[x_{i-1}, x_i]$ lies in $\Omega$ and $\abs{x_{i-1}-x_i} \le t$ for $i=1,\ldots, N$.
Moreover, there exists a constant $C=C(\Omega)$ independent of $t$ and $r$ such that
\begin{equation}				\label{eq2202th}
N \le Cr/t.
\end{equation}
Then by using triangle inequalities, we have
\begin{equation}			\label{eq1712th}
\abs{\mathbf{A}(y)-\mathbf{A}(x_0)} \le \abs{\mathbf{A}(y)-\bar{\mathbf A}_{\Omega(\bar x,t)}} + \sum_{i=1}^N \,\abs{\bar{\mathbf A}_{\Omega(x_i,t)}-\bar{\mathbf A}_{\Omega(x_{i-1},t)}} + \abs{\bar{\mathbf A}_{\Omega(x_0,t)}-\mathbf{A}(x_0)}.
\end{equation}
Note that by \cite[Appendix]{HK20}, we have
\begin{equation}			\label{eq1713th}
\begin{aligned}
\abs{\bar{\mathbf A}_{\Omega(x_0,t)}-\mathbf{A}(x_0)} &\le C\int_0^t \frac{\omega_{\mathbf A}(s,x_0)}{s}\,ds,\\
\abs{\bar{\mathbf A}_{\Omega(x_i,t)}-\bar{\mathbf A}_{\Omega(x_{i-1},t)}} &\le C \omega_{\mathbf A}(t, \Omega(x_0, 7r)),  \quad i=1,\ldots, N.
\end{aligned}
\end{equation}
Using \eqref{eq1713th} and averaging the inequality \eqref{eq1712th} over $y \in \Omega(\bar x, t)$, we obtain
\[
\fint_{\Omega(\bar x, t)}  \abs{\mathbf{A}(y)-\mathbf{A}(x_0)}\,dy \le \omega_{\mathbf A}(t, \bar x)+ CN \omega_{\mathbf A}(t, \Omega(x_0, 7r)) + C\int_0^t \frac{\omega_{\mathbf A}(s,x_0)}{s}\,ds.
\]
Then \eqref{eq1307th} follows from the above inequality and \eqref{eq2202th}. \qed

\subsection{Construction and symmetry relation for Green's function}			\label{sec3.4}
In this section, we shall prove that 
The fucntion $G(x,y)$ given by
\begin{equation}		\label{symmetry}
G(x,y)=G^\ast(y,x),\quad \forall\, x, y \in \Omega, \quad x \neq y,
\end{equation}
is the Green's function for the operator $L$ in $\Omega$.
Then in light of \eqref{eq0927f}, we see that the Green's function $G(x,y)$ has the pointwise bound \eqref{bound1}.

To establish \eqref{symmetry}, first observe that $G^\ast(\cdot, x_0)$ satisfies
\[
L^\ast G^\ast(\cdot, x_0)=0\;\text{ in }\;\Omega\setminus B(x_0, r)\;\text{ for any }\;r>0.
\] 
By \cite{DK17, DEK18}, we see that $G^\ast(\cdot, x_0)$ is continuous in $\Omega\setminus B(x_0, r)$ for any $r>0$.
Next, for $y_0 \in \Omega$ and $\epsilon>0$, let $u=G_\epsilon(\cdot, y_0) \in W^{2,p}(\Omega) \cap W^{1,p}_0(\Omega)$ be a unique strong solution of the problem \eqref{eq2059m} with $f=\frac{1}{\abs{\Omega(y_0,\epsilon)}} \, \chi_{\Omega(y_0,\epsilon)}$.
Then by \eqref{eq2037m}, we have
\begin{equation}				\label{eq1709th}
G_\epsilon(x_0, y_0)= \fint_{\Omega(y_0, \epsilon)} G^\ast(y, x_0)\,dy.
\end{equation}
We conclude from \eqref{eq1709th} and \eqref{eq0927f} that for any $x$, $y \in \Omega$ with $x\neq y$, we have
\[
\abs{G_\epsilon(x,y)} \le C \abs{x-y}^{2-n}, \quad \forall  \epsilon \in(0, \tfrac13 \abs{x-y}),
\]
which coincides with \cite[Lemma~2.11]{HK20}.
With the above key estimate at hand, we can replicate the same argument as in \cite{HK20} and construct the Green's function $G(x,y)$ for the operator $L$ out of the family $\set{G_\epsilon(x,y)}$.
In particular, there is a sequence $\set{\epsilon_j} \to 0$ such that (see \cite[(2.24)]{HK20})
 \[
G_{\epsilon_j}(\cdot, y_0) \to G(\cdot, y_0)\;\text{ uniformly on }\; \Omega\setminus  B(y_0,r),\quad  \forall\, r>0 .
\]
Then, by using the continuity of $G^\ast(\cdot, x_0)$ away from $x_0$, we derive from \eqref{eq1709th} the desired identity \eqref{symmetry}.
\qed

\subsection{Asymptotic behavior near a pole}			\label{sec3.5}
In this section, we assume that condition \eqref{eq1243f} holds for some $r_0>0$.
The estimate \eqref{eq0824f} leaves a room that we might be able to get an asymptotic behavior
\[
G^\ast(x,x_0)-G_0(x, x_0)= o(\abs{x-x_0}^{2-n})\;\text{ as }\; x\to x_0.
\]
To see this, we closely follow the argument in Section~\ref{sec3.2}.
Let $v$ be as before in \eqref{eq1042th}. 
For any $\epsilon>0$, we can choose $\delta \in (0, \frac18 r_0]$ such that 
\begin{equation}			\label{eq1349f}
\left(\int^{\delta}_0 \frac{1}{s}\int_0^s \frac{\omega_{\mathbf A}(t, \Omega(x_0, r_0))}{t}\,dt\,ds\right)^{\frac{n-2}{n}} < \epsilon.  
\end{equation}  
Let $\zeta$ be a smooth function on $\bR^n$ such that
\[
0\le \zeta \leq 1, \quad \zeta=0 \;\text{ in }\;B(x_0,\delta/2),\quad \zeta=1\;\text{ in }\bR^n\setminus B(x_0,\delta),\quad \abs{D \zeta} \le 4/\delta,
\]
We then define $\mathbf{g}_1$ and $\mathbf{g}_2$ by
\begin{equation*}
\mathbf{g}_1= -\zeta(\mathbf{A}- \mathbf{A}_0) G_0(\cdot, x_0)\quad\text{and}\quad \mathbf{g}_2= -(1-\zeta)(\mathbf{A}-\mathbf{A}_0) G_0(\cdot, x_0).
\end{equation*}
Then we have  $\mathbf{g}_1 \in L^{p}(\Omega)$ for $p > \frac{n}{n-2}$ and $\mathbf{g}_2 \in L^{\frac{n}{n-2}}(\Omega)$.
Indeed, we have
\begin{equation}				\label{eq1730f}
\int_\Omega \abs{\mathbf{g}_1}^{p} \le C \int_{\Omega\setminus \Omega(x_0,\delta/2)} \abs{x-x_0}^{(2-n)p}\,dx \le C\delta^{(2-n)p+n}
\end{equation}
and using \eqref{eq1224th}, \eqref{eq1142tu}, and \eqref{eq1349f}, we have 
\begin{align}					
						\nonumber
\int_\Omega\, \abs{\mathbf{g}_2}^{\frac{n}{n-2}}& = \sum_{k=0}^\infty \int_{\Omega(x_0, 2^{-k}\delta)\setminus \Omega(x_0, 2^{-k-1}\delta)} \abs{\mathbf{g}_2}^{\frac{n}{n-2}} \le C \sum_{k=0}^\infty  \fint_{\Omega(x_0, 2^{-k}\delta)} \abs{\mathbf{A}-\mathbf{A}_0} \\
						\label{eq1432f}
&\le C \sum_{k=0}^\infty \int_0^{2^{-k}\delta} \frac{\omega_{\mathbf A}(t)}{t}\,dt \le  C\left(\int_0^{\delta} \frac{1}{s}\int_0^s \frac{\omega_{\mathbf A}(t,x_0)}{t}\,dt\,ds \right) \le C\epsilon^{\frac{n}{n-2}}.
\end{align}

Let $v_1$ and $v_2$ be the solutions of the problems \eqref{eq1226th} and \eqref{eq1244th}, respectively.
Then, similar to \eqref{eq1618th} and \eqref{eq1541th}, using \eqref{eq1730f} and \eqref{eq1432f}, we obtain
\begin{equation}				\label{eq1618f}
\norm{v_1}_{L^{p}(\Omega)} \le C\delta^{(2-n)+\frac{n}{p}}\quad (\,p>\tfrac{n}{n-2}\,)\qquad\text{and}\qquad
\norm{v_2}_{L^{\frac{n}{n-2}}(\Omega)} \le C\epsilon.
\end{equation}
For $y_0 \in \Omega \setminus \set{x_0}$ with $\abs{y_0-x_0} \le 5\delta$, we estimate 
$v_1(y_0)$ and $v_2(y_0)$ as follows.
Set
\[
r=\tfrac15 \abs{y_0-x_0}
\]
and using Lemma~\ref{prop02}, we have
\begin{equation}				\label{eq1335sat}
\abs{v_i(y_0)} \le  C \fint_{\Omega(y_0,2r)} \abs{v_i} + C \int_0^{r} \frac{\omega_{\mathbf{g}_i}(t, \Omega(y_0,2r))}{t}\,dt,\quad i=1,2.
\end{equation}
Using \eqref{eq1618f} together with H\"older's inequalities, we have
\begin{equation}				\label{eq1340sat}
\begin{aligned}
\fint_{\Omega(y_0,2r)} \abs{v_1} &\le C r^{-\frac{n}{p}}  \norm{v_1}_{L^{p}(\Omega(y_0, 2r))} \le C  r^{-\frac{n}{p}}\delta^{(2-n)+\frac{n}{p}}\quad (\,p>\tfrac{n}{n-2}\,),\\
\fint_{\Omega(y_0,2r)} \abs{v_2} &\le Cr^{2-n}  \norm{v_2}_{L^{\frac{n}{n-2}}(\Omega(y_0, 2r))} \le C \epsilon r^{2-n}.
\end{aligned}
\end{equation}
On the other hand, applying Lemma~\ref{lem3.13} with $\eta=\zeta$ and $\eta=1-\zeta$, respectively, and using the fact that $r\le \delta \le \frac18 r_0$, for all $t \in (0,r]$ we have
\begin{multline*}
\omega_{\mathbf{g}_i}(t, \Omega(y_0,2r))
 \le C r^{2-n}\left( \omega_{\mathbf A}(t, \Omega(x_0, r_0))+ \frac{t}{r} \int_0^t \frac{\omega_{\mathbf A}(s, \Omega(x_0, r_0))}{s}\,ds\right),\quad i=1,2.
\end{multline*}
Then, similar to \eqref{eq0746f}, we have
\begin{equation}				\label{eq1331sat}
\int_0^{r} \frac{\omega_{\mathbf{g}_i}(t, \Omega(y_0,2r))}{t}\,dt \le C r^{2-n} \int_0^r \frac{\omega_{\mathbf A}(t, \Omega(x_0, r_0))}{t}\,dt,\quad i=1,2.
\end{equation}
Now we substitute \eqref{eq1340sat} and \eqref{eq1331sat} back to \eqref{eq1335sat} to obtain
\begin{align}					\nonumber
\abs{v(y_0)}&\le \abs{v_1(y_0)}+\abs{v_2(y_0)} \\
							\label{eq1406sat}
&\le C r^{2-n}\left( r^{n-2-\frac{n}{p}}\delta^{(2-n)+\frac{n}{p}}+ \epsilon +\int_0^r \frac{\omega_{\mathbf A}(t, \Omega(x_0, r_0))}{t}\,dt\right)\quad (\,p>\tfrac{n}{n-2}\,).
\end{align}
Note that $p>\frac{n}{n-2}$ implies $n-2-\frac{n}{p}>0$.
From \eqref{eq1406sat} and the fact that
\[
v=G^*(\cdot,x_0)-G_0(\cdot,x_0)\quad\text{and}\quad r=\tfrac15 \abs{y_0-x_0},
\]
we conclude that
\[
\lim_{x\to x_0}\, \abs{x-x_0}^{n-2} \,\abs{G^*(x,x_0)-G_0(x,x_0)} =0
\]
since $y_0 \in \Omega \setminus \set{x_0}$ and $\epsilon$ are arbitrary.
Since $G_0$ is symmetric, we obtain \eqref{eq1725sun} from the above and \eqref{symmetry}.

\section{Appendix: Proof of Lemma~\ref{prop01}}
Let us consider the quantity
\[
\phi(x,r):=\inf_{q\in \bR}\left( \fint_{B(x,r)}
\abs{u - q}^{\frac12} \right)^{2}
\]
for $x \in B(x_0, \frac32 R)$ and $0<r\le \frac14 R$.
We decompose $u=v+w$, where $w \in L^{2}(B(x, r))$ is the solution of the problem
\[
\left\{
\begin{aligned}
L_0^*w &= -\dv^2 \left((\mathbf{A}- \bar{\mathbf A}) u\right) +\dv^2 \left(\mathbf{g}-\bar{\mathbf g}\right)\;\mbox{ in }\;B(x, r),\\
w&=\frac{\left(\mathbf{g}-\bar{\mathbf g}-(\mathbf{A}- \bar{\mathbf A}) u\right)\nu\cdot \nu}{\bar{\mathbf A}\nu\cdot \nu} \;\mbox{ on }\;\partial B(x, r),
\end{aligned}
\right.
\]
where we use the notation
\[
\bar{\mathbf A}=\bar{\mathbf A}_{B(x, r)},\quad \bar{\mathbf g} = \bar{\mathbf g}_{B(x, r)},\quad  L_0^\ast w=  \dv^2(\bar{\mathbf A} w).
\]
By \cite[Lemma~2.23]{DK17}, we have
\[
\abs{\set{y\in B(x, r): \abs{w(y)} > t}}
\le \frac{C}{t}\left(\norm{u}_{L^\infty(B(x, r))} \int_{B(x, r)} \abs{\mathbf{A}-\bar{\mathbf A}} +  \int_{B(x, r)}  \abs{\mathbf g -\bar{\mathbf g}} \,\right),
\]
where $C=C(n,\lambda, \Lambda)$.
This yields (see \cite[pp. 422-423]{DK17} and recall the notation \eqref{def_mo})
\begin{equation}				\label{eq15.50a}
\left(\fint_{B(x, r)} \abs{w}^{\frac12} \right)^{2} \le C\omega_{\mathbf A}(r) \,\norm{u}_{L^\infty(B(x, r))} + C \omega_{\mathbf g}(r, x),
\end{equation}
where $C=C(n,\lambda, \Lambda)$.
On the other hand, note that $v=u-w$ satisfies
\[
L_0^\ast  v = \dv^2(\bar{\mathbf A} v)= \dv(\bar{\mathbf A} \nabla v)= 0 \;\mbox{ in }\;B(x, r),
\]
and so does $v-q$ for any constant $q \in \bR$.
By the interior estimates for elliptic equations with constant coefficients, we have
\[
\norm{D v}_{L^\infty(B(x, \frac12 r))} \le C_0 r^{-1} \left(\fint_{B(x, r)} \abs{v - q}^{\frac12}\,\right)^{2},
\]
where $C_0=C_0(n,\lambda, \Lambda)>0$ is a constant.
Let $0< \kappa \le \frac12$ to be a number to be fixed later.
Then, we have
\begin{equation}				\label{eq15.50b}
\left(\fint_{B(x, \kappa r)} \abs{v - \overline{v}_{B(x, \kappa r)}}^{\frac12} \right)^{2} \le 2\kappa r \norm{D v}_{L^\infty(B(x, \frac12 r))} \le 2C_0 \kappa \left(\fint_{B(x, r)} \abs{v - q}^{\frac12} \, \right)^{2}.
\end{equation}
By using the decomposition $u=v+w$, we obtain from \eqref{eq15.50b} that
\begin{align*}
\left(\fint_{B(x, \kappa r)} \abs{u - \overline{v}_{B(x, \kappa r)}}^{\frac12}\right)^{2} & \le 2 \left(\fint_{B(x, \kappa r)} \abs{v - \overline{v}_{B(x, \kappa r)}}^{\frac12} \right)^{2}+ 2\left(\fint_{B(x, \kappa r)} \abs{w}^{\frac12} \right)^{2}\\
& \le 4C_0\kappa \left(\fint_{B(x, r)} \abs{v - q}^{\frac12} \right)^{2}+ 2\left(\fint_{B(x, \kappa r)} \abs{w}^{\frac12} \right)^{2}\\
& \le  8C_0\kappa \left(\fint_{B(x, r)} \abs{u - q}^{\frac12} \right)^{2} + (2\kappa^{-2n}+8C_0 \kappa) \left(\fint_{B(x, r)} \abs{w}^{\frac12} \right)^{2}.
\end{align*}
Since $q\in \bR$ is arbitrary, by using \eqref{eq15.50a}, we thus obtain
\[
\phi(x,\kappa r) \le 8C_0 \kappa \, \phi(x, r)+ C \left( \omega_{\mathbf A}(r) \,\norm{u}_{L^\infty(B(x, r))} +  \omega_{\mathbf g}(r, x) \right),
\]
where $C=C(n,\lambda, \Lambda, \kappa)$.
Now we choose $\kappa$ such that $8C_0\kappa=\frac12$. 
Then we have
\[
\phi(x, \kappa r) \le \frac12 \phi(x,r)+ C \left( \omega_{\mathbf A}(r) \,\norm{u}_{L^\infty(B(x,r))} +  \omega_{\mathbf g}(r, x) \right),
\]
where $C=C(n,\lambda, \Lambda)$.
By iterating, for $j=1,2,\ldots$, we get
\[
\phi(x, \kappa^j r) \le 2^{-j} \phi(x,r) +C \norm{u}_{L^\infty(B_r(x))} \sum_{i=1}^{j} 2^{1-i} \omega_{\mathbf A}(\kappa^{j-i} r) + C \sum_{i=1}^{j} 2^{1-i} \omega_{\mathbf g}(\kappa^{j-i} r, x).
\]
We note that
\begin{align*}			
\sum_{j=0}^\infty \sum_{i=1}^{j} 2^{1-i} \omega_{\mathbf A}(\kappa^{j-i} r)&=\sum_{i=1}^\infty \sum_{j=i}^\infty 2^{1-i}\omega_{\mathbf A}(\kappa^{j-i} r)=\sum_{i=1}^\infty  2^{1-i} \sum_{j=0}^\infty \omega_{\mathbf A}(\kappa^{j} r)\\
&= 2 \sum_{j=0}^\infty \omega_{\mathbf A}(\kappa^{j} r)\le C \int_0^{r} \frac{\omega_{\mathbf A}(t)}{t}\,dt<+\infty,
\end{align*}
where $C=C(\kappa)=C(n,\lambda, \Lambda)$ and we used \cite[Lemma~2.7]{DK17}.
A similar computation holds for $\omega_{\mathbf g}(r, x)$, and thus  we obtain
\begin{equation}		\label{eq2013th}
\sum_{j=0}^\infty \phi(x, \kappa^j r) \le 2\phi(x, r) + C \norm{u}_{L^\infty(B(x,r))} \int_0^{r} \frac{\omega_{\mathbf A}(t)}{t}\,dt + C \int_0^{r} \frac{\omega_{\mathbf g}(t, x)}{t}\,dt.
\end{equation}


Now, let $q_{x,r}$ be chosen so that
\begin{equation}		\label{eq12.14fa}
\left(\fint_{B_r(x)} \abs{u - q_{x,r}}^{\frac12} \right)^2 =\inf_{q \in \bR} \left( \fint_{B_r(x)} \abs{u - q}^{\frac12} \right)^2=\phi(x, r).
\end{equation}
Since we have
\[
\abs{q_{x,r} - q_{x, \kappa r}}^{\frac12} \le
\abs{u(y)- q_{x,r}}^{\frac12} + \abs{u(y) - q_{x, \kappa r}}^{\frac12},
\]
taking the average over $y \in B_{\kappa r}(x)$ and then taking the square, we obtain
\[
\abs{q_{x, r} - q_{x,\kappa r}}\le 2 \kappa^{-2n} \phi(x, r) +2 \phi(x,\kappa r) \le 2 \kappa^{-2n} \left( \phi(x, r)+ \phi(x, \kappa r)\right).
\]
Then, by iterating and using the triangle inequality
\[
\abs{q_{x,\kappa^{N} r} - q_{x, r}}  \le 4 \kappa^{-2n} \sum_{j=0}^N \phi(x, \kappa^{j} r).
\]
Therefore, by using the fact that $q_{x,\kappa^{N} r} \to u(x)$ as $N\to \infty$ and \eqref{eq2013th}, we have 
\begin{align}
					\nonumber
\abs{u(x)-q_{x,r}}  & \le 4 \kappa^{-2n} \sum_{j=0}^\infty \phi(x, \kappa^{j} r) \\							\label{eq13.13}
&\le C \phi(x, r)+C\norm{u}_{L^\infty(B(x,r))} \int_0^{r} \frac{\omega_{\mathbf A}(t)}{t}\,dt + C \int_0^{r} \frac{\omega_{\mathbf g}(t, x)}{t}\,dt.
\end{align}
By averaging the inequality
\[
\abs{q_{x, r}}^{\frac12} \le \abs{u(y) -q_{x,r}}^{\frac12} + \abs{u(y)}^{\frac12}
\]
over $y \in B(x,r)$, taking the square, and using \eqref{eq12.14fa} we get
\[
\abs{q_{x, r}} \le 2 \phi(x,r) + 2 \left(\fint_{B(x, r)} \abs{u}^{\frac12}\right)^{2} \le 4 r^{-n} \norm{u}_{L^1(B(x, r))}.
\]
Therefore, by combining the above with \eqref{eq13.13}, we get
\[
\abs{u(x)}  \le C r^{-n} \norm{u}_{L^1(B(x,r))} + C\norm{u}_{L^\infty(B(x,r))} \int_0^{r} \frac{\omega_{\mathbf A}(t)}{t}\,dt + C \int_0^{r} \frac{\omega_{\mathbf g}(t, x)}{t}\,dt.
\]
Now, taking supremum for $x\in B(\bar x,r)$, where $\bar x \in B(x_0, \frac32 R)$ and $0<r \le \frac14 R$, we have
\[
\norm{u}_{L^\infty(B(\bar x,r))}
\le C r^{-n} \norm{u}_{L^1(B(\bar x, 2r))} + C \norm{u}_{L^\infty(B(\bar x, 2r))} \int_0^{r} \frac{\omega_{\mathbf A}(t)}{t}\,dt +C \int_0^{r} \frac{\omega_{\mathbf g}(t, B(\bar x,r))}{t}\,dt.
\]
We fix $r_0=r_0(n,\lambda, \Lambda, \omega_{\mathbf A})$ such that
\[
C \int_0^{r_0} \frac{\omega_{\mathbf A}(t)}{t}\,dt \le \frac1{3^{n}}.
\]
Then, we have for any $\bar x \in B(x_0, \frac32 R)$ and $0<r\le \min(r_0, \frac14 R)$ that
\begin{equation}				\label{eq1716tu}
\norm{u}_{L^\infty(B(\bar x,r))} \le
3^{-n}\norm{u}_{L^\infty(B(\bar x,2r))} + C r^{-n} \norm{u}_{L^1(B(x_0,2R))} + C \int_0^{r} \frac{\omega_{\mathbf g}(t, B(x_0,2R))}{t}\,dt.
\end{equation}

For $k=1,2,\ldots$, denote 
\[r_k=\left(\frac32-\frac{1}{2^k}\right)R.\]
Note that $r_{k+1}-r_k=2^{-k-1}R$ and $r_1=R$.
For $\bar x\in B(x_0, r_k)$ and $r\le 2^{-k-2}R$, we have $B(\bar x, 2r) \subset B(x_0,r_{k+1})$.
We take $k_0\ge 1$ sufficiently large such that $2^{-k_0-3} R_0\le r_0$.
Note that $k_0=k_0(r_0, R_0)=k_0(n, \lambda, \Lambda, \omega_{\mathbf A}, R_0)$.
Then for any $k \ge k_0$, we have $2^{-k-2}R \le r_0$ and thus by taking $r=2^{-k-2}R$ in \eqref{eq1716tu}, we obtain
\[
\norm{u}_{L^\infty(B(x_0, r_k))} \le 3^{-n} \norm{u}_{L^\infty(B(x_0, r_{k+1}))}+C 2^{kn} R^{-n} \norm{u}_{L^1(B(x_0,2R))}+C \int_0^{R} \frac{\omega_{\mathbf g}(t, B(x_0,2R))}{t}\,dt.
\]
Multiplying the above by $3^{-kn}$ and summing over $k=k_0, k_0+1,\ldots$, we get
\begin{multline*}			
\sum_{k=k_0}^\infty 3^{-kn}\norm{u}_{L^\infty(B(x_0,r_k))} \le \sum_{k=k_0}^\infty 3^{-(k+1)n} \norm{u}_{L^\infty(B(x_0,r_{k+1}))}\\
+C R^{-n}\norm{u}_{L^1(B(x_0, 2R))} + \int_0^{R} \frac{\omega_{\mathbf g}(t, B(x_0,2R))}{t}\,dt.
\end{multline*}
Noting that $R=r_1 \le r_{k_0}$, and shifting the index in the second sum, we get \eqref{eq1023m}.\qed


\end{document}